\numberwithin{equation}{section}
\definecolor{MLU}{rgb}{0,0.455,0.345}
\definecolor{hg}{rgb}{0.25,0.6,0}
\newtheorem{theorem}{Theorem}[section]
\newtheorem{prop}[theorem]{Proposition}
\newtheorem{cor}[theorem]{Corollary}
\newtheorem{examples}[theorem]{Examples}
\newtheorem{remark}[theorem]{Remark}
\newtheorem{rmk}[theorem]{Remarks}
\newenvironment{subproof}[1][\proofname]
{%
  \begin{proof}[#1]%
}
{%
  \end{proof}%
}
\def\M{\mathsf{M}}
\def\p{\mathsf{p}}
\def\q{\mathsf{q}}
\def\a{\mathsf{a}}
\def\ev{\mathsf{C}}
\def\cH{\mathcal{H}}
\def\Uk{\mathsf{O}_{\kappa}}
\def\B{\mathbb{B}}
\def\R{\mathbb{R}}
\def\C{\mathbb{C}}
\def\N{\mathbb{N}}
\def\A{\mathfrak{A}}
\def\K{\frak{K}}
\def\ez{\mathbb{E}_{0,\mu}}
\def\ef{\mathbb{E}_{1,\mu}}
\def\L{\mathcal{L}}
\def\Lis{\mathcal{L}{\rm{is}}}
\def\F{\mathfrak{F}}
\def\B{\mathbb{B}}
\def\Bm{\mathbb{B}^m}
\def\cA{\mathcal{A}}
\def\kb{\varphi^{\ast}_{\kappa}}
\def\kf{\psi^{\ast}_{\kappa}}
\def\pk{\pi_\kappa}
\def\Re{\mathcal{R}}
\def\Rek{\mathcal{R}_{\kappa}}
\def\Rc{\mathcal{R}^c}
\def\Rck{\mathcal{R}^c_{\kappa}}
\def\supp{{\rm{supp}}}
\def\uf{{\rm{unif}}}
\def\Rk{\mathbb{R}^m_\kappa}
\begin{document}

\title[SDF and WF for uniformly regular surfaces]{The surface diffusion and  the Willmore flow for 
uniformly regular hypersurfaces}

\author[J. LeCrone]{Jeremy LeCrone}
\address{Department of Mathematics \& Computer Science, 
University of Richmond, 
Richmond, VA 23173, USA}
\email{jlecrone@richmond.edu}

\author[Y. Shao]{Yuanzhen Shao}
\address{Department of Mathematical Sciences,
         Georgia Southern University, 
         Statesboro, GA 30460, USA}
\email{yshao@georgiasouthern.edu}

\author[G. Simonett]{Gieri Simonett}
\address{Department of Mathematics, 
Vanderbilt University, 
Nashville, TN 37240, USA}
\email{gieri.simonett@vanderbilt.edu}

\thanks{This work was supported by a grant from the Simons Foundation (\#426729, Gieri Simonett).}

\subjclass[2010]{
35K55, 
53C44, 
54C35, 
35B65, 
35B35 
}
\keywords{Surface diffusion flow, Willmore flow, uniformly regular manifolds, geometric evolution equations,
continuous maximal regularity, critical spaces, stability of spheres}

\begin{abstract}
We consider the surface diffusion and Willmore flows acting on a general class of (possibly non--compact)
hypersurfaces parameterized over a uniformly regular reference manifold possessing a tubular 
neighborhood with uniform radius. 
The surface diffusion and Willmore flows each give rise to a fourth--order quasilinear parabolic
equation with nonlinear terms satisfying a specific singular structure.
We establish well--posedness of both flows for initial surfaces that are 
 $C^{1+\alpha}$--regular and
parameterized over a uniformly regular hypersurface.
For the Willmore flow, we also show long--term existence
for initial surfaces which are $C^{1+\alpha}$--close to a sphere, and we prove
that these solutions become spherical as time goes to infinity.
\end{abstract}
\maketitle

\section{Introduction}\label{Section 1}
The surface diffusion and Willmore flows are geometric evolution equations
that describe the motion of hypersurfaces in Euclidean space 
(or, more generally, in an ambient Riemannian manifold).
The normal velocity of evolving surfaces is determined by purely geometric quantities.
For both flows, the mean curvature is involved in the evolution equations, 
while the Willmore flow additionally depends upon Gauss curvature.

These flows have been studied by several authors for compact (closed) hypersurfaces. 
In this setting, existence, regularity, and qualitative behavior of solutions have been analyzed
in \cite{EscMaySim98, EM10, LeCroneSim13, McWe11, Shao15, Whe11, Whe12}
for the surface diffusion flow, 
and in \cite{Bla09, KS01, KS02, KS04, MaSi02, MaSi03, McWh16, Shao13, Sim01}
for the Willmore flow, to mention just a few publications.

In this paper, we consider {uniformly regular hypersurfaces}.
It should be emphasized that these surfaces may be non-compact.
The concept of uniformly regular Riemannian manifolds was introduced by Amann~\cite{Ama13,Ama12}
and it contains  the class of compact Riemannian manifolds as a special case.
The study of geometric flows on non--compact manifolds is an active
research topic, both from the point of view of PDE theory and in relation to its applications in 
geometry and topology. 
To the best of our knowledge, the current literature on the surface diffusion and Willmore flows for
non--compact manifolds all concern surfaces defined over an infinite cylinder or 
entire graphs over $\R^m$, or the Willmore flow 
with small initial energy, cf. \cite{Asai13, KL12, KS01, LeCroneSim16, LeCroneSim18}.
Our work generalizes the study of these two flows to a larger class of manifolds.

In our main result we establish well--posedness for initial surfaces that are  $C^{1+\alpha}$--regular and
parameterized over a uniformly regular hypersurface.
Moreover, we show that solutions instantaneously regularize and become smooth,
and even analytic in case $\Sigma$ is analytic.
In order to obtain our results, we show that the pertinent underlying evolution equations can
be formulated as parabolic quasilinear equations of fourth order over the reference surface $\Sigma$.
Our analysis relies on the theory of continuous maximal regularity and the results and techniques 
developed in \cite{LeCroneSim18, Shao15,ShaoSim14}.

The results in Theorem~\ref{S3: Thm SDF well-posedness} and 
Theorem~\ref{S5: Thm WF well-posedness} are new.
However, we note that in case $\Sigma$ is an infinitely long cylinder embedded in $\R^3$, 
an analogous result to Theorem~\ref{S3: Thm SDF well-posedness} was obtained in 
\cite{LeCroneSim18} for the surface diffusion flow.

For the Willmore flow, Theorem~\ref {S5: Thm WF well-posedness} is also new 
even if $\Sigma$ is a compact (smooth, closed) surface.
Previous results impose more regularity on the initial surface, for instance $C^{2+\alpha}$ in \cite{Sim01}.

Theorem~\ref{S5: Thm WF stability}, where global existence and convergence to a sphere is shown for
surfaces that are $C^{1+\alpha}$--close to a sphere, also seems to be new.
A corresponding result was obtained in \cite{Sim01} for surfaces close to a sphere in the 
$C^{2+\alpha}$--topology.
The authors in \cite{KS02} showed the existence of a lower bound on the lifespan of a smooth 
solution, which depends only on how much the curvature of the initial surface is concentrated in space.
In \cite{KS01, KS04}, the authors proved convergence to round spheres under suitable smallness
assumptions on the total energy of the surface. 
Here we note that the energy used in \cite{KS01, KS04} involves second--order derivatives,
whereas we only need smallness in the $C^{1+\alpha}$--topology.
In particular, we obtain global existence and convergence for non--convex initial surfaces.
 
\noindent The organization of the paper is as follows:

In Sections 2.1 and 2.2, we introduce the concept of uniformly regular manifolds and define the 
function spaces used in this paper. 
In Sections 2.3 and 2.4, we review continuous maximal regularity theory and its applications to 
quasilinear parabolic equations with singular nonlinearity. 
These results form the theoretic basis for the study of the surface diffusion and Willmore flows.

In Section 3, we introduce the concept of uniformly regular hypersurfaces with a uniform 
tubular neighborhood (called (URT)--hypersurfaces) and work out several examples.
We utilize these concepts to parameterize the evolving hypersurfaces driven by surface diffusion 
and Willmore flows as normal graphs over a (URT)-reference hypersurface.

In Section 4, we establish our main results regarding existence, uniqueness, regularity, and semiflow
properties for solutions to the surface diffusion flow over (URT)--hypersurfaces in $\R^{m+1}$.
In Section 5, we likewise establish well--posedness properties for solutions to the Willmore
flow over (URT)--hypersurfaces in $\R^3$. 
Additionally, we show stability of Euclidean spheres under perturbations in 
the $C^{1+\alpha}$--topology.

We conclude the paper with an appendix where we state and prove some additional properties of 
normal graphs over (URT)-hypersurfaces.

\medskip
\noindent\textbf{Notation:} 
For  two Banach spaces $X$ and $Y$, $X\doteq Y$ means that they are equal in the 
sense of equivalent norms. $\L(X,Y)$ denotes the set of all bounded linear 
maps from $X$ to $Y$ and $\Lis(X,Y)$ is the subset of $\L(X,Y)$ consisting of 
all bounded linear isomorphisms from $X$ to $Y$. 
For $x\in X$, $\B_X(x,r)$ denotes the (open) ball in $X$ with radius $r$ and center $x$.
We sometimes write $\B(x,r)$, in lieu of $\B_X(x,r)$, in case the setting is clear,
and we write $\B^m(x,r)$ when $X = \R^m$.
We denote by $g_m$ the Euclidean metric in $\R^m$. Given an embedded hypersurface 
$\Sigma$ in $\R^m$, $g_m|_\Sigma$ means the metric on $\Sigma$ induced by $g_m$.
Finally, we set $\N_0= \N\cup\{0\}$.


\section{Preliminaries}\label{Section 2}

\subsection{Uniformly regular manifolds}\label{Section 2.1}
The concept of {\em uniformly regular (Riemannian) manifolds} was introduced 
by H.~Amann in \cite{Ama13} and \cite{Ama12}.
Loosely speaking, an $m$--dimensional Riemannian manifold $(\M,g)$ is uniformly regular if its
differentiable structure is induced by an atlas such that all its local patches are of approximately the same size,
all derivatives of the transition maps are bounded, and the pull-back metric of $g$ in every local coordinate is
comparable to the Euclidean metric $g_m$.

We will now state some structural properties of {uniformly regular manifolds}
which will be used in the analysis of the the surface diffusion  flow and  the Willmore flow
in subsequent sections.

An oriented $C^{\infty}$--manifold $(\M,g)$ of dimension $m$ and without 
boundary is {\em uniformly regular} if it admits  an orientation-preserving atlas 
$\A:=\{( \Uk ,\varphi_\kappa): \kappa\in \K\}$, with a countable index set $\K$, 
satisfying the following conditions.

\begin{itemize}
\item[(R1)] There exists $K \in \N$ such that any intersection of more than $K$ 
	coordinate patches is empty.
\item[(R2)] $\varphi_{\kappa}( \Uk )=\Bm,$ where $\Bm$ is the unit Euclidean ball 
	centered at the origin in $\R^m$.  Moreover, $\A$ is uniformly shrinkable; by which we mean
	that there exists some $r\in (0,1)$ such that $\{\psi_\kappa(r \Bm): \kappa\in \K\}$ forms 
	a cover for $\M$, where $\psi_{\kappa}:=\varphi_{\kappa}^{-1}$.
\item[(R3)] $\|\varphi_\eta \circ \psi_\kappa\|_{k,\infty}\leq c(k)$ for all $\kappa\in\K$, 
	$k\in \N_0$ and $\eta\in \K$ such that $\mathsf{O}_\eta \cap\Uk\neq\emptyset$.
\item[(R4)] $\| \psi_\kappa^* g\|_{k,\infty}\leq c(k)$ for all $\kappa\in\K$ and $k\in\N_0$.
\item[(R5)] $\psi_\kappa^* g \sim g_m $ for all $\kappa\in \K$. 
	Here $g_m$ is the Euclidean metric in $\R^m$ and $\psi_\kappa^* g$ denotes the pull-back 
	metric of $g$ by $\psi_\kappa$.
\end{itemize}
Here (R5) means that there exists some number $c\geq 1$  such that
$$
(1/c) |\xi|^2\leq \psi_\kappa^* g(x)(\xi,\xi) \leq c|\xi|^2 ,\quad x\in \Bm,\;\xi\in \R^m, \;\kappa\in\K. 
$$
Given an open subset $U\subset \R^m$, a Banach space $X$, and a mapping  $u:U\to X$,
$$
\|u\|_{k,\infty}:=\max_{|\alpha|\leq k} \|\partial^\alpha u\|_\infty
$$
is the norm of the space $BC^k(U, X)$, which consists of all functions $u\in C^k(U,X)$ such that $\|u\|_{k,\infty}<\infty$.

Any uniformly regular manifold $(\M,g)$ possesses a \emph{localization system subordinate to} $\A$, 
by which we mean a family 
$\{(\pk ,\zeta_\kappa):  \kappa \in \K\}$ satisfying:
\begin{itemize}
\item[(L1)] $ \pk \in \mathcal{D}( \Uk ,[0,1])$ and $\{\pi_\kappa^2: \kappa \in \K \}$ is a partition of unity subordinate to 
            the cover $\{\Uk : \kappa\in\K\}$.     
\item[(L2)] $\zeta_\kappa := \kb \zeta$ with $\zeta \in\mathcal{D}(\Bm,[0,1])$ satisfying 
            $\zeta|_{\supp(\kf \pk )}\equiv 1$, $\kappa \in \K$.
\item[(L3)] $\|\psi_{\kappa}^{\ast} \pk \|_{k,\infty} + \|  \zeta  \|_{k,\infty}  \leq c(k) $, for $\kappa \in \K$, $k\in \N_0$.
\end{itemize}

\medskip
Given $k\in \N\cup \{\omega\}$, the concept of $C^k$--{\em uniformly regular manifold} 
is defined by modifying (R3), (R4), (L1)-(L3) in an obvious way, where $\omega$ is the symbol for 
real analyticity.
\goodbreak
\begin{remark}
In \cite{DisShaoSim}, the  authors showed that a $C^{\infty}$--manifold  without boundary is {uniformly regular} iff it is of bounded geometry, i.e. it is geodesically complete, of positive injectivity radius and all covariant derivatives of the curvature tensor are bounded.
In particular, every compact manifold without boundary is {uniformly regular} and the manifolds considered in \cite{LeCroneSim13, LeCroneSim16} are all {uniformly regular}.
\end{remark}
Given $\sigma,\tau\in\N_0$, we define the $(\sigma,\tau)$--tensor bundle of $\M$ as
$$
T^\sigma_\tau \M:=T \M^{\otimes \sigma }\otimes T^* \M^{\otimes \tau }, 
$$ 
where $T \M $ and $T^* \M $ are the tangent and the cotangent bundle of $\M$, respectively.
Let $\mathcal{T}^\sigma_\tau \M$ denote the $C^\infty ( \M )$--module of all smooth 
sections of $T^\sigma_\tau \M$.

Throughout the rest of this paper, we will adopt the following convention.
\smallskip
\begin{mdframed}
\begin{itemize}
\item $\p$ always denotes a point on a uniformly regular manifold.
\item $k\in \N_0$ and $s\geq 0$.
\item $\sigma,\tau\in \N_0$, $V=V^{\sigma}_{\tau}:=\{T^{\sigma}_{\tau}\M, (\cdot|\cdot)_g\}$, $E=E^{\sigma}_{\tau}:=\{\R^{m^{\sigma}\times m^{\tau}},(\cdot|\cdot)\}$.
\end{itemize}
\end{mdframed}

\medskip

Setting $\Rk=\R^m$ for $\kappa\in \K$, we define 
$\boldsymbol{L}_{1,loc}(\R^m,E):=\prod_{\kappa}{L}_{1,loc}(\Rk,E),$ 
\begin{equation*}
\begin{aligned}
& \Rck:L_{1,loc}( \M ,V) \rightarrow L_{1,loc}( \Rk ,E),\quad u\mapsto \psi_{\kappa}^{\ast}({\pk}u), \\
&\Rek:L_{1,loc}( \Rk ,E) \rightarrow L_{1,loc}( \M ,V),\quad v_{\kappa} \mapsto  \pk \kb v_{\kappa}.
\end{aligned}
\end{equation*}
Here, and in the following, it is understood that a partially defined and compactly supported 
tensor field is automatically extended over the whole base manifold by identifying it to be zero
outside its original domain. We further introduce two maps:
\begin{equation*}
\begin{aligned}
&\Rc:  L_{1,loc}(\M,V) \rightarrow \boldsymbol{L}_{1,loc}(\R^m,E),
      &&u \mapsto (\Rck u)_{\kappa \in \K}, \\
&\Re: \boldsymbol{L}_{1,loc}( \R^m,E) \rightarrow L_{1,loc}(\M,V),
     &&(v_\kappa)_{\kappa \in \K} \mapsto \sum\limits_{\kappa\in \K} \Rek v_{\kappa}.
\end{aligned}
\end{equation*}

\subsection{H\"older and little H\"older spaces on uniformly regular manifolds}\label{Section 2.2}
In this subsection we follow Amann \cite{Ama12, Ama13}, see also \cite{ShaoSim14}.
We define  
$$
BC^k(\M,V):=(\{u\in{C^k(\M,V)}:\|u\|_{k,\infty}^{\M}<\infty\},\|\cdot\|_{k,\infty} ),
$$
where $\|u\|_{k,\infty} :={\max}_{0\leq i \leq k}\||\nabla^i u|_g\|_{\infty}$.
Set
$$
BC^\infty(\M,V):=\bigcap_k BC^k(\M,V)
$$
endowed with the conventional projective topology. Then
\begin{center}
$bc^k(\M,V):=$ the closure of $BC^{\infty}(\M,V)$ in $BC^k (\M,V)$.
\end{center}
Letting $k<s<k+1$, the  H\"older space  $BC^s(\M,V)$ is defined by
$$
BC^s(\M,V):=(bc^k(\M,V),bc^{k+1}(\M,V))_{s-k,\infty}.
$$
Here $(\cdot,\cdot)_{\theta,\infty}$ is the real interpolation method, see \cite[Example I.2.4.1]{Ama95}. 
For $s\geq 0$, we define the  little H\"older spaces by 
\begin{center}
$bc^s(\M,V):=$ the closure of $BC^\infty (\M,V)$ in $BC^s(\M,V)$. 
\end{center}
The  spaces  $BC^s(\R^m,E)$ and $bc^s(\R^m,E)$ are defined in a similar manner.
When $s\notin\N_0$, we can give an alternative characterization of these spaces on $\R^m$.
For $0<s<1$ and $0<\delta\leq\infty$, we define a seminorm by
$$
[u]^{\delta}_{s,\infty}:=\sup_{h\in(0,\delta)^m}\frac{\|u(\cdot+h)-u(\cdot)\|_{\infty}}{|h|^s}, \quad [\cdot]_{s,\infty}:=[\cdot]^{\infty}_{s,\infty}.
$$
For $k<s<k+1$,  the space  $BC^s(\R^m,E)$ can be equivalently defined as 
\begin{align*}
BC^s(\R^m,E)=\big(\{u\in BC^k(\R^m,E):\|u\|_{s,\infty}< \infty \},\|\cdot\|_{s,\infty}\big),
\end{align*}
where $\|u\|_{s,\infty}:=\|u\|_{k,\infty}+\max_{|\alpha|=k}[\partial^{\alpha} u]_{s-k,\infty}$; and
\begin{center}
$u\in BC^s(\R^m,E)$ belongs to $bc^s(\R^m,E)$ iff $\lim\limits_{\delta\rightarrow 0}[\partial^{\alpha}u]^{\delta}_{s-[s],\infty}=0,\quad |\alpha|=[s]$.
\end{center}
For $\F \in \{bc,BC\}$, we put $\boldsymbol{\F}^s:=\prod_{\kappa}{\F}^s_{\kappa}$ with ${\F}^s_{\kappa}:={\F}^s(\Rk,E)$. We denote by $l_{\infty}(\boldsymbol{\F}^s)$ the linear subspace of $\boldsymbol{\F}^s$ consisting of all $\boldsymbol{x}=(x_\kappa)_{\kappa \in \K}$ such that
$$
\|\boldsymbol{x}\|_{l_{\infty}(\boldsymbol{\F}^s)}:=\sup_{\kappa}\|x_{\kappa}\|_{\F^s_{\kappa}}<\infty.
$$
We define
$
l_{\infty,\uf}(\boldsymbol{bc}^k)
$
as the linear subspace of $l_{\infty}  (\boldsymbol{bc}^k)$ 
consisting of all $\boldsymbol{u}=(u_{\kappa})_{\kappa \in \K}$ 
such that $(\partial^{\alpha}u_\kappa)_{\kappa \in \K}$ is uniformly continuous on $\Rk$ for $|\alpha|\leq k$, uniformly with respect to $\kappa\in\K$. 
For  $k<s<k+1$, we define
$
l_{\infty,\uf}(\boldsymbol{bc}^s)
$
as the linear subspace of $l_{\infty,\uf}(\boldsymbol{bc}^k)$ of all $\boldsymbol{u}=(u_{\kappa})_{\kappa \in \K}$ such that 
\begin{align}
\label{S2: infnty,uf}
\lim\limits_{\delta\rightarrow 0}\max_{|\alpha|=k}[\partial^{\alpha}u_{\kappa}]^{\delta}_{s-k,\infty}=0 \quad \text{uniformly with respect to } \kappa\in\K.
\end{align}

The following properties of little H\"older spaces were first established in \cite{Ama13, Ama12}.
We also refer to {\cite[Theorem 2.1 and Proposition 2.2]{ShaoSim14}.

\begin{prop}
\label{S3: Prop: retraction of bc}
Let $s\geq 0$. Then $\Re$ is a retraction from $l_{\infty,\uf}(\boldsymbol{bc}^s)$ onto $bc^s(\M,V)$
with $\Rc$ as a coretraction. Similarly, 
$$
[u\mapsto (\psi_\kappa^* ( \zeta_\kappa u) )_{\kappa \in \K}	] \in \L( bc^s(\M,V), l_{\infty,\uf}(\boldsymbol{bc}^s)).
$$ 
\end{prop}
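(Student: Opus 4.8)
The plan is to reduce the statement to the corresponding well-known facts about the spaces $l_{\infty,\uf}(\boldsymbol{bc}^s)$ on $\R^m$ (which, per the text, are established in \cite{Ama13,Ama12} and \cite{ShaoSim14}) together with the algebraic identity that exhibits $\Re\circ\Rc$ as the identity on $bc^s(\M,V)$. Recall that a \emph{retraction} $r\colon Y\to Z$ is a surjective bounded linear map admitting a bounded linear right inverse (the coretraction); so the claim breaks into three assertions: (i) $\Rc\in\L(bc^s(\M,V),\,l_{\infty,\uf}(\boldsymbol{bc}^s))$; (ii) $\Re\in\L(l_{\infty,\uf}(\boldsymbol{bc}^s),\,bc^s(\M,V))$; and (iii) $\Re\circ\Rc=\id_{bc^s(\M,V)}$. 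The final displayed mapping property is then obtained by the same argument as (i) with $\pk$ replaced by $\zeta_\kappa$.

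First I would treat the boundedness assertions (i) and (ii). The key point is that in each local chart the operators $\Rck$ and $\Rek$ are, up to the fixed cut-off functions $\pk,\zeta_\kappa$ (whose chart-pulled-back derivatives are bounded uniformly in $\kappa$ by (L3)) and the transition maps $\varphi_\eta\circ\psi_\kappa$ (whose derivatives are bounded uniformly in $\kappa$ by (R3)), nothing more than composition with a diffeomorphism and multiplication by a smooth compactly supported function. For integer $s=k$ one checks directly, using the Leibniz and chain rules, that these operations map $BC^k$ to $BC^k$ with norm bounded independently of $\kappa$, and that they preserve the uniform-continuity condition defining $l_{\infty,\uf}(\boldsymbol{bc}^k)$ and $bc^k(\M,V)$; the finite-overlap condition (R1) ensures the sum $\sum_\kappa\Rek v_\kappa$ is locally finite with at most $K$ nonzero terms at each point, so that $\Re$ is well defined and the $\sup_\kappa$ norm on the product side controls the $\M$-side norm. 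For non-integer $s$, $k<s<k+1$, one invokes real interpolation: $bc^s(\M,V)=(bc^k,bc^{k+1})_{s-k,\infty}$ by definition (and the analogous identity holds for the $l_{\infty,\uf}$-scale, which is essentially the content of the cited results in \cite{ShaoSim14}), so (i) and (ii) follow for all $s$ by interpolating the integer-order bounds; alternatively one argues directly with the H\"older seminorm $[\,\cdot\,]^\delta_{s-k,\infty}$, using that a $BC^1$ change of variables distorts H\"older seminorms only by a factor controlled by the $C^{1}$-norm of the diffeomorphism, and that the vanishing condition \eqref{S2: infnty,uf} is preserved — uniformly in $\kappa$ precisely because the chart constants in (R3) and (L3) are $\kappa$-independent.

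The identity (iii) is the algebraic heart of the matter and is where the localization system is used essentially. By definition $\Re\Rc\,u=\sum_\kappa \pk\,\kb\big(\psi_\kappa^*(\pk u)\big)$. In each summand $\kb\circ\psi_\kappa^*=(\psi_\kappa\circ\varphi_\kappa)^*=\id$ on the relevant domain, so $\pk\,\kb(\psi_\kappa^*(\pk u))=\pk^2 u$ as sections over $\Uk$ (extended by zero elsewhere, which is legitimate since $\pk\in\mathcal D(\Uk)$); summing and using that $\{\pk^2\}$ is a partition of unity subordinate to $\{\Uk\}$ by (L1) gives $\Re\Rc\,u=\big(\sum_\kappa\pk^2\big)u=u$. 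The cut-off $\zeta$ in (L2) plays no role here — it is only needed to make the chart-side objects compactly supported — but one must be mildly careful that all the pointwise identities are valid after the zero-extension convention, which is exactly what the support conditions $\supp\pk\subset\Uk$ and $\zeta|_{\supp(\kf\pk)}\equiv1$ guarantee.

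The main obstacle I anticipate is not any single step but the bookkeeping needed to make the boundedness estimates \emph{uniform in the chart index} $\kappa$ for the full non-integer H\"older scale: one has to track how the transition-map bounds (R3), the metric-equivalence (R5), and the localization bounds (L3) enter the estimate of $[\partial^\alpha(\Rck u)_\kappa]^\delta_{s-k,\infty}$, and verify that the $\delta\to0$ vanishing in \eqref{S2: infnty,uf} is inherited. Once the integer-order uniform bounds are in hand, however, the non-integer case is a soft consequence of the real-interpolation definition of the spaces, so the essential work is finite-order and elementary, and (iii) then immediately upgrades "bounded with bounded right inverse" to "retraction with coretraction." The final $\L$-statement about $u\mapsto(\psi_\kappa^*(\zeta_\kappa u))_\kappa$ requires no new idea: $\zeta_\kappa=\kb\zeta$ is, like $\pk$, a fixed cut-off with $\kappa$-uniform chart bounds, so repeating the argument for (i) verbatim gives membership in $l_{\infty,\uf}(\boldsymbol{bc}^s)$ with $\kappa$-uniform norm.
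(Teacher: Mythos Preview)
The paper does not give its own proof of this proposition; it is stated as a known result and attributed to Amann \cite{Ama13,Ama12} and to \cite[Theorem~2.1 and Proposition~2.2]{ShaoSim14}. Your sketch is a correct outline of essentially the argument one finds in those references: the algebraic identity $\Re\Rc=\id$ via the partition of unity (L1), $\kappa$-uniform boundedness in each chart from (R1), (R3), (L3), and passage to non-integer exponents either by interpolation or by direct estimation of the H\"older seminorms and the vanishing condition \eqref{S2: infnty,uf}. So there is no discrepancy to report --- you have simply supplied what the paper chose to cite.
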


Let $(\cdot,\cdot)^0_{\theta,\infty}$ denote the continuous interpolation method, c.f. \cite[Example I.2.4.4]{Ama95}. 
\begin{prop}
\label{S3: Prop: interpolation of bc}
Suppose that $0<\theta<1$, $0\leq{s}_0<s_1$ and $s=(1-\theta)s_0+\theta{s_1}$ with $s_1,s_2,s\notin\N_0$. Then
$$
(bc^{s_0}({\M},V),bc^{s_1}({\M},V))_{\theta,\infty}^0\doteq bc^{s}({\M},V).
$$
\end{prop}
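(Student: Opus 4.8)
The plan is to transport the identity to the Euclidean model of Subsection~\ref{Section 2.1} by means of the retraction--coretraction pair of Proposition~\ref{S3: Prop: retraction of bc}, and then to reduce what remains to the classical interpolation of (little) H\"older spaces on $\R^m$ together with a uniformity argument in the index $\kappa\in\K$. By Proposition~\ref{S3: Prop: retraction of bc} the \emph{same} maps $\Re$ and $\Rc$ form a retraction--coretraction pair between $l_{\infty,\uf}(\boldsymbol{bc}^\sigma)$ and $bc^\sigma(\M,V)$ for every $\sigma\geq 0$; since $l_{\infty,\uf}(\boldsymbol{bc}^{s_1})\hookrightarrow l_{\infty,\uf}(\boldsymbol{bc}^{s_0})$, and likewise on $\M$, these are interpolation couples on which $\Re$ and $\Rc$ act boundedly. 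As retractions are intertwined by every interpolation functor, $\Re$ is a retraction of $(l_{\infty,\uf}(\boldsymbol{bc}^{s_0}),l_{\infty,\uf}(\boldsymbol{bc}^{s_1}))^{0}_{\theta,\infty}$ onto $(bc^{s_0}(\M,V),bc^{s_1}(\M,V))^{0}_{\theta,\infty}$, with $\Rc$ as coretraction (realizing the norm up to equivalence). Comparing this with the retraction $\Re\colon l_{\infty,\uf}(\boldsymbol{bc}^{s})\to bc^{s}(\M,V)$ of Proposition~\ref{S3: Prop: retraction of bc}, the proposition follows once we prove
\[
\bigl(l_{\infty,\uf}(\boldsymbol{bc}^{s_0}),\,l_{\infty,\uf}(\boldsymbol{bc}^{s_1})\bigr)^{0}_{\theta,\infty}\;\doteq\;l_{\infty,\uf}(\boldsymbol{bc}^{s}).\qquad(\star)
\]

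For $(\star)$ I would start from the scalar facts on $\Rk=\R^m$. The hypothesis that $s_0,s_1,s\notin\N_0$ makes the classical H\"older interpolation apply cleanly: $(BC^{s_0}(\R^m,E),BC^{s_1}(\R^m,E))_{\theta,\infty}\doteq BC^{s}(\R^m,E)$, and hence the continuous interpolation space $(bc^{s_0}(\R^m,E),bc^{s_1}(\R^m,E))^{0}_{\theta,\infty}$ coincides with the closure of $BC^{\infty}(\R^m,E)$ in $BC^{s}(\R^m,E)$, i.e. with $bc^{s}(\R^m,E)$ (see \cite{Ama95,Ama12,Ama13}). Because the component spaces $bc^{\sigma}(\Rk,E)$ are literally one and the same for all $\kappa$, and because real interpolation is compatible with $l_\infty$--products --- the $K$--functional of an $l_\infty$--direct sum being comparable, with an absolute constant, to the supremum over $\kappa$ of the componentwise $K$--functionals --- one gets $(l_\infty(\boldsymbol{bc}^{s_0}),l_\infty(\boldsymbol{bc}^{s_1}))_{\theta,\infty}\doteq l_\infty(\boldsymbol{BC}^{s})$, with norm equivalent to $\sup_\kappa\|\cdot\|_{BC^{s}(\Rk,E)}$. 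Restricting to the closed subspaces $l_{\infty,\uf}(\boldsymbol{bc}^\sigma)$, identity $(\star)$ is reduced to the assertion that the continuous interpolation space --- the closure of the smaller, ``smooth'' endpoint in this $l_\infty$--H\"older norm --- is exactly $l_{\infty,\uf}(\boldsymbol{bc}^{s})$.

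This last identification is where the real work lies, and it amounts to a uniform-in-$\kappa$ approximation statement: a family $\boldsymbol{u}=(u_\kappa)_{\kappa\in\K}\in l_\infty(\boldsymbol{BC}^{s})$ is approximable in the $\sup_\kappa\|\cdot\|_{BC^{s}}$ norm by families in $l_\infty(\boldsymbol{BC}^{\infty})$ if and only if $\boldsymbol{u}\in l_{\infty,\uf}(\boldsymbol{bc}^{s})$, that is, $\boldsymbol{u}\in l_\infty(\boldsymbol{bc}^{[s]})$ with the derivatives of order $\leq[s]$ equicontinuous uniformly in $\kappa$ and with the uniform decay \eqref{S2: infnty,uf} of the top-order H\"older seminorms. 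For the ``if'' direction I would mollify: with $\varrho_\varepsilon$ a standard mollifier, $(\varrho_\varepsilon*u_\kappa)_{\kappa\in\K}\in l_\infty(\boldsymbol{BC}^{\infty})$, and the elementary estimate bounding $\|\varrho_\varepsilon*v-v\|_{BC^{s}(\R^m,E)}$ in terms of $\|v\|_{BC^{[s]}}$ and the H\"older modulus of the top derivatives of $v$ shows, precisely because of the uniform hypotheses on $\boldsymbol{u}$, that $\sup_\kappa\|\varrho_\varepsilon*u_\kappa-u_\kappa\|_{BC^{s}}\to 0$ as $\varepsilon\to 0$. The ``only if'' direction is immediate, since \eqref{S2: infnty,uf} (together with the uniform equicontinuity) is closed under uniform $BC^{s}$--convergence and holds trivially for $l_\infty(\boldsymbol{BC}^{\infty})$. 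The main obstacle, and indeed the only nontrivial point, is the bookkeeping of uniformity: one must ensure that the interpolation functor interacts with the $l_\infty$--product and the ``$\uf$'' side conditions so as to single out exactly the uniform modulus condition \eqref{S2: infnty,uf} rather than a merely componentwise version of it --- the $l_\infty$--compatibility of interpolation and the uniform mollification estimate above are where this is secured. Full details of arguments of this type may be found in \cite{Ama12,Ama13} and \cite{ShaoSim14}.
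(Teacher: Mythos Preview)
The paper does not supply its own proof of this proposition: it is stated as a quotation from prior work, with the sentence preceding Proposition~\ref{S3: Prop: retraction of bc} attributing both results to \cite{Ama12,Ama13} and \cite[Theorem~2.1 and Proposition~2.2]{ShaoSim14}. So there is no in-paper argument to compare against.

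That said, your sketch is the right one and is essentially the proof found in those references: transport to the Euclidean model via the retraction--coretraction pair $(\Re,\Rc)$, use that interpolation functors commute with retractions, and reduce to the identity $(\star)$ on the sequence spaces $l_{\infty,\uf}(\boldsymbol{bc}^{\,\cdot})$, which in turn is handled by combining the classical $(bc^{s_0},bc^{s_1})^{0}_{\theta,\infty}\doteq bc^{s}$ on $\R^m$ with the $l_\infty$--compatibility of the $K$--functional and a uniform mollification argument to pin down the ``$\uf$'' condition \eqref{S2: infnty,uf}. Two small remarks. First, you write ``$s_0,s_1,s\notin\N_0$'' whereas the statement as printed has ``$s_1,s_2,s$''; you have silently (and correctly) repaired a typo --- there is no $s_2$ in the hypotheses, and the case $s_0=0$ (allowed by $0\le s_0$) must be handled separately via the reiteration theorem, so the intended exclusion is $s_0,s_1,s\notin\N_0$ with $s_0>0$, or else one treats $s_0\in\N_0$ by a density/reiteration detour. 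Second, the step ``restricting to the closed subspaces $l_{\infty,\uf}$'' deserves one more word: interpolation does not in general commute with passing to closed subspaces, so what you actually use is that $l_{\infty,\uf}(\boldsymbol{bc}^{s_1})$ is dense in $l_{\infty,\uf}(\boldsymbol{bc}^{s_0})$ together with the characterization of $(\cdot,\cdot)^{0}_{\theta,\infty}$ as the closure of the small space; your mollification paragraph is exactly what supplies this, but it is worth flagging that this is the load-bearing point rather than a mere restriction.
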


\subsection{Continuous maximal regularity}\label{Section 2.3}
For a fixed interval $I=[0,T]$,  $\mu\in(0,1)$, and a given Banach space $X$, we define
\begin{align*}
&BC_{1-\mu}(I,X):=\{u\in{C(\dot{I},X)}:[t\mapsto{t^{1-\mu}}u]\in C(\dot{I},X),\lim\limits_{t\to 0^+ } t^{1-\mu}\|u(t)\|_X=0\},\\
& \|u\|_{C_{1-\mu}}:=\sup_{t\in{\dot{I}}} t^{1-\mu} \|u(t)\|_{X},
\end{align*}
where $\dot{I}=I\setminus\{0\}$;
and
$$
BC_{1-\mu}^1(I,X):=\{u\in{C^1(\dot{I},X)}: u,\dot{u}\in BC_{1-\mu}(I,X)\}.
$$
If $I=[0,T)$ is a half open interval, then
\begin{align*}
&C_{1-\mu}(I,X):=\{v\in C(\dot{I},X) : v \in BC_{1-\mu}([0,t],X) ,\quad t<T\},\\
&C^1_{1-\mu}(I,X):=\{v\in C^1(\dot{I},X): v,\dot{v}\in C_{1-\mu}(I,X) \}.
\end{align*}
We equip these two spaces with the natural Fr\'echet topology induced by the topology of $BC_{1-\mu}([0,t],X)$ and $BC_{1-\mu}^1([0,t],X)$, respectively. 
\smallskip\\
Assume that $E_1 \overset{d}{\hookrightarrow} E_0$ is a pair of densely embedded Banach spaces. 
An operator $A$ is said to belong to the class $\cH(E_1,E_0)$, if $-A$ generates a strongly continuous analytic semigroup on $E_0$ with $dom(A)=E_1$. 
We define
\begin{align}
\label{S3: ez&ef}
{\ez}(I) := BC_{1-\mu}(I,E_0), \quad 
{\ef}(I) := BC_{1-\mu}(I,E_1)\cap BC_{1-\mu}^1(I,E_0) ,
\end{align}
which are themselves Banach spaces when equipped with the norms
\[
\begin{split}
\| v \|_{{\ez}(I)} &:= \sup_{t \in I} t^{1 - \mu} \|v(t)\|_{E_0}, \\
	\| v \|_{\ef(I)} &:= \sup_{t \in I} t^{1 - \mu} 
		\big( \|\dot{v}(t)\|_{E_0} +\|v(t)\|_{E_1} \big),
\end{split}
\]
respectively.
For $A \in\cH(E_1,E_0)$, we say $({\ez}(I),{\ef}(I))$ is a pair of maximal regularity of ${A}$  if
$$
\displaystyle \Big( \frac{d}{dt} + A, \gamma_0\Big)\in \Lis ({\ef}(I),{\ez}(I)\times E_\mu ) 
$$
where $\gamma_0$ is the evaluation map at $0$, i.e., $\gamma_{0}(u)=u(0)$, and $E_\mu:=(E_0,E_1)_{\mu,\infty}^0$. 
In this case, we use the notation
$$
A \in \mathcal{M}_\mu(E_1,E_0).
$$
\subsection{Quasilinear equations with singular nonlinearity}\label{Section 2.4}
Consider the following abstract quasilinear parabolic evolution equation
\begin{equation}
\label{S2: Abstract EE}
\left\{\begin{aligned}
\frac{d}{dt} u +A(u) u &= F_1(u) +F_2(u) , \quad t>0, \\
u(0)&=x.
\end{aligned}\right.
\end{equation}
We assume that $V_\mu\subset E_\mu$ is an open subset of the 
continuous interpolation space $E_\mu:= (E_0,E_1)_{\mu,\infty}^0$ and 
the operators $(A, F_1, F_2)$ satisfy
the following conditions.
\begin{itemize}
\item[\textbf{(H1)}] Local Lipschitz continuity of $(A, F_1)$:
$$
(A,F_1)\in C^{1-}(V_\mu, \mathcal{M}_\mu(E_1,E_0)\times E_0).
$$
\item[\textbf{(H2)}] Structural regularity of $F_2$:

\noindent There exists a number $\gamma\in (\mu,1)$ such that $F_2: V_\mu \cap E_\gamma \to E_0$. \
Moreover, there are numbers $\gamma_j \in [\mu,\gamma]$, $\varrho_j\geq 0$, and $m\in\N$ with
\begin{equation}
\label{S2: structure reg 1}
\frac{\varrho_j (\gamma-\mu)+ (\gamma_j-\mu)}{1-\mu}\leq 1,\quad \text{for all }j=1,2,\cdots,m,
\end{equation} 
so that for each $x_0\in V_\mu$ and $R>0$ there is a constant $C_R=C_R(x_0)>0$ for which the estimate
\begin{equation}
\label{S2: structure reg 2}
|F_2(x_1)-F_2(x_2)|_{E_0}  \leq C_R \sum\limits_{j=1}^m (1 + |x_1|^{\varrho_j}_{E_\gamma} +  |x_2|^{\varrho_j}_{E_\gamma} )|x_1-x_2|_{E_{\gamma_j}}
\end{equation} 
holds for all $x_1,x_2\in \bar\B_{E_\mu}(x_0,R)\cap (V_\mu\cap E_\gamma)$.
\end{itemize}

Following the convention in  \cite{PruWil17} and \cite{LeCroneSim18}, we call the index
$j$ {\em subcritical} if \eqref{S2: structure reg 2} is a strict inequality and {\em critical}
in case equality holds in \eqref{S2: structure reg 2}.

\begin{theorem}\label{S2: Thm MR}
\cite[Theorem~2.2]{LeCroneSim18}
Suppose $(A, F_1,F_2)$ satisfies  {\bf (H1)--(H2)}. 
\begin{itemize}
\item[(a)] Given any $x_0\in V_\mu$, there exist positive constants $\tau=\tau(x_0),$ $\varepsilon=\varepsilon(x_0),$ and $\sigma=\sigma(x_0)$ such that \eqref{S2: Abstract EE} has a unique solution
$$
u(\cdot,x)\in \ef([0,\tau])
$$
for all initial values $x \in \bar{\B}_{E_\mu}(x_0,\varepsilon)$. Moreover,
$$
\|u(\cdot,x_1) - u(\cdot,x_2)\|_{\ef([0,\tau])} \leq \sigma \|x_1-x_2\|_{E_\mu},\quad x_1,x_2\in \bar{\mathbb{B}}_{E_\mu}(x_0,\varepsilon).
$$
\item[(b)] Each solution with initial value $x_0 \in V_\mu$ exists on a
maximal interval $J(x_0):=[0,t^+)=[0,t^+(x_0))$ and enjoys the regularity
$$
u(\cdot,x_0)\in C([0,t^+),E_\mu)\cap C((0,t^+),E_1).
$$
\item[(c)] If the solution $u(\cdot,  x_0)$ satisfies the conditions:
\begin{itemize}
\item[(i)] $u(\cdot,  x_0) \in UC(J(x_0), E_\mu)$ and
\item[(ii)] there exists $\eta>0$ so that ${\rm dist}_{E_\mu}(u( t ,  x_0),\partial V_\mu)>\eta$ for all $t\in J(x_0)$,
\end{itemize}
then it holds that $t^+(x_0)=\infty$ and so $u(\cdot,  x_0)$ is a global solution of \eqref{S2: Abstract EE}
Moreover, if the embedding $E_1 \hookrightarrow E_0$ is compact, then condition (i) may be
replaced by the assumption:
\begin{itemize}
\item[(i.a)] the orbit $\{u(t,x_0): t\in [\tau,t^+(x_0))\}$ is bounded in $E_\delta$ for some $\delta\in (\mu,1]$ and some $\tau \in (0, t^+(x_0))$.
\end{itemize}
\end{itemize}
\end{theorem}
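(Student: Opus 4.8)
Since the statement is quoted verbatim from \cite[Theorem~2.2]{LeCroneSim18}, I will only outline how the proof runs; the plan is to carry out the classical fixed point scheme for quasilinear parabolic equations in the maximal regularity class, with the singular nonlinearity $F_2$ absorbed through \eqref{S2: structure reg 2}. For part~(a), I would fix $x_0\in V_\mu$, set $A_0:=A(x_0)\in\mathcal{M}_\mu(E_1,E_0)$, and on $I:=[0,\tau]$ let $v_*$ denote the solution of $\dot v_*+A_0v_*=0$, $v_*(0)=x$. For $v$ ranging over a small closed ball of radius $r$ about $v_*$ in $\ef(I)$, define $\Phi(v)$ to be the solution of
\[
\dot u+A_0u=(A_0-A(v))v+F_1(v)+F_2(v),\qquad u(0)=x.
\]
Because $A_0\in\mathcal{M}_\mu(E_1,E_0)$, the operator $(\tfrac{d}{dt}+A_0,\gamma_0)^{-1}\in\Lis(\ez(I)\times E_\mu,\ef(I))$ has norm bounded uniformly in $\tau\le 1$, so it is enough to check that $v\mapsto(A_0-A(v))v+F_1(v)+F_2(v)$ maps the ball into a small ball of $\ez(I)$ and is a strict contraction there, once $\tau$, $r$, and $|x-x_0|_{E_\mu}$ are small. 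The contributions governed by \textbf{(H1)} are routine: one uses $\|A_0-A(v(t))\|_{\L(E_1,E_0)}\lesssim\|v(t)-x_0\|_{E_\mu}$ and the fact that $\sup_{t\in\dot{I}}\|v(t)-x_0\|_{E_\mu}$ is small on the ball for small $\tau$, since paths in $\ef(I)$ are continuous into $E_\mu$ up to $t=0$ and $v(0)=x$ lies near $x_0$.

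The crux --- and the step I expect to be the main obstacle --- is the estimate of $F_2(v)$ in $\ez(I)=BC_{1-\mu}(I,E_0)$. Here I would use the embedding $\ef(I)\hookrightarrow BC_{\theta-\mu}(I,E_\theta)$ for $\theta\in(\mu,1]$, i.e.\ $\sup_{t\in\dot{I}}t^{\theta-\mu}\|v(t)\|_{E_\theta}<\infty$, together with the defining property of the \emph{continuous} interpolation spaces $E_\theta=(E_0,E_1)^0_{\theta,\infty}$, namely that $t^{\theta-\mu}\|v_*(t)\|_{E_\theta}\to 0$ as $t\to 0^+$. Applying \eqref{S2: structure reg 2} to $v_1(t),v_2(t)\in E_\gamma$ and collecting powers of $t$, the quantity $t^{1-\mu}|F_2(v_1(t))-F_2(v_2(t))|_{E_0}$ is bounded by a sum over $j$ of terms, the $j$-th being $t$ to the exponent $(1-\mu)\big(1-\tfrac{\varrho_j(\gamma-\mu)+(\gamma_j-\mu)}{1-\mu}\big)$ --- nonnegative by \eqref{S2: structure reg 1} --- times $\|v_1-v_2\|_{\ef(I)}$; an entirely analogous bound, comparing $F_2(v(t))$ with $F_2$ at a fixed point of $V_\mu\cap E_\gamma$, controls $\|F_2(v)\|_{\ez(I)}$ itself. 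For a subcritical index the exponent is strictly positive, producing a small positive power of $\tau$; for a critical index it vanishes, but then necessarily $\varrho_j>0$ (since $\gamma_j<1$), and the corresponding term instead carries the factor $\big(\sup_{t\in\dot{I}}t^{\gamma-\mu}\|v_i(t)\|_{E_\gamma}\big)^{\varrho_j}$, which is small on a small ball about $v_*$ by the vanishing property just quoted. Thus $\Phi$ is a strict contraction on the ball for $\tau$ small, Banach's fixed point theorem yields the unique solution $u(\cdot,x)\in\ef([0,\tau])$, and the same estimates applied to differences of solutions give the Lipschitz dependence on $x$.

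For part~(b), uniqueness in $\ef$ --- first within the fixed point class, then globally by a patching argument --- allows local solutions to be glued into a maximal solution on $J(x_0)=[0,t^+)$, while the regularity $u\in C([0,t^+),E_\mu)\cap C((0,t^+),E_1)$ is read off from $\ef([0,\tau])\hookrightarrow C([0,\tau],E_\mu)$ combined with restarting at small positive times (with the time weight then inactive and $u(\delta)\in E_1$). For part~(c), conditions~(i)--(ii) force the orbit to have compact closure inside $V_\mu$, over which the local existence time $\tau$ of part~(a) has a positive lower bound (by a finite covering, since $\tau$ depends only locally on the base point); were $t^+<\infty$, restarting at $t^+-\tau/2$ would extend the solution beyond $t^+$, contradicting maximality, so $t^+=\infty$. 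Finally, if $E_1\hookrightarrow E_0$ is compact then $E_\delta\hookrightarrow E_\mu$ is compact for $\delta\in(\mu,1]$, so a bounded orbit in $E_\delta$ on $[\tau,t^+)$ is relatively compact in $E_\mu$, and a standard equicontinuity argument combining this with the local well-posedness of part~(a) shows $u(\cdot,x_0)\in UC(J(x_0),E_\mu)$; hence (i.a) may replace (i).
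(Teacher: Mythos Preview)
The paper does not supply its own proof of this theorem: it is quoted directly from \cite[Theorem~2.2]{LeCroneSim18} and used as a black box, so there is nothing in the present paper to compare your proposal against. Your outline is the standard fixed-point argument in continuous maximal regularity and matches the strategy of \cite{LeCroneSim18} (and, for the non-singular part, \cite{CS01}): freeze $A$ at $x_0$, invert $(\tfrac{d}{dt}+A_0,\gamma_0)$, and absorb $F_2$ via \eqref{S2: structure reg 2} together with the weighted embedding $\ef(I)\hookrightarrow BC_{\theta-\mu}(I,E_\theta)$; the subcritical/critical dichotomy you describe, including the observation that criticality forces $\varrho_j>0$ because $\gamma_j\le\gamma<1$, is exactly the mechanism used there.
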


\section{URT--hypersurfaces}\label{UR}
Suppose $\Sigma$ is an oriented smooth hypersurface without boundary which is embedded in $\R^{m+1}$. 
Let $\a>0$. 
Then $\Sigma$ is said to have a {\em tubular neighborhood of radius $\a$}
if the map
\begin{equation}
\label{S3: Diffeomorphism X}
X:  \Sigma \times (-\a,\a) \to \R^{m+1}: \;\; [(\p,r) \mapsto \p + r \nu_{\Sigma}(\p)]
\end{equation}
is a diffeomorphism onto its image $U_\a:=X((-\a,\a)\times \Sigma)$. 
Here $\nu_\Sigma$  is the normal unit vector field 
compatible with the orientation of $\Sigma$. 
We refer to $U_\a$ as the tubular neighborhood of $\Sigma$ of width  $2\a$ and 
note that $U_\a=\{x\in\R^{m+1}: {\rm dist}\,(x,\Sigma)<\a\}$.

Finally,  we say that $\Sigma$ has {\em a tubular neighborhood} if 
there exists a number $\a>0$ such that the above property holds.

\begin{rmk}\label{S3: Rmk: tubular-nbgh}
(a) We lose no generality in assuming $\Sigma$ is oriented, as any smooth 
embedded hypersurface without boundary is orientable, cf. \cite{Sam68}. 

\smallskip\noindent
(b) Any smooth (in fact, $C^2$) compact embedded hypersurface  without boundary has a tubular neighborhood,
see for instance \cite[Exercise 2.11]{GilTru01}. 

\smallskip\noindent
(c) Suppose $\Sigma$  is a smooth (oriented) embedded hypersurface with unit normal field $\nu_\Sigma$.
Then $\Sigma$ is said to satisfy the {\em uniform ball condition of radius $\a>0$} if
at each point  $\p\in\Sigma$, the open balls $\B(\p \pm \a \nu_\Sigma (\p) , \a)$ do not intersect $\Sigma$.

The following assertions are equivalent:
\begin{itemize}
\item[(i)]  $\Sigma$ has a tubular neighborhood of radius $\a$.
\item[(ii)] $\Sigma$ satisfies the uniform ball condition of radius $\a$.
\end{itemize}
For the reader's convenience, we include a proof of this equivalence.
\begin{proof}
{(i) $\Rightarrow$ (ii).}
Suppose there exists $\p\in\Sigma$ such that $\B(x_0,\a)\cap \Sigma \neq \emptyset,$ where $x_0:=\p+\a\nu_\Sigma (\p)$.
Then $s:={\rm dist}\,(x_0,  \bar\B (x_0,\a)\cap \Sigma )<\a$ and there exists $\q\in \bar\B (x_0,\a)\cap \Sigma$
such that $x_0=\q + s \nu_\Sigma(\sf q)$. Hence
$x_0=X(\p,\a)= X(\q,s)$, with $(\p, \a)\neq (\q,s)$, contradicting the assumption that $X$ is bijective.
The case $x_0=\p -\a\nu_\Sigma  (\p) $ is treated in the same way.

\smallskip\noindent
{(ii) $\Rightarrow$ (i).}
We only need to prove the injectivity of $X$. 
Suppose, by contradiction, that $X(\p_1,r_1)=X(\p_2,r_2)=x$ for $(\p_1,r_1)\neq (\p_2,r_2)$.
Without loss of generality  we may assume that $r_1 \in (0,\a)$, as we can otherwise 
replace $\nu_\Sigma(\p_i)$ by $- \nu_\Sigma(\p_i)$. Moreover, we may assume
that $|r_2|\le r_1$. Let $s\in (r_1,\a)$ and set 
$$y:=X(\p_1,s)=x+(s-r_1)\nu_\Sigma  (\p_1)  .$$
Then we have 
$|y-\p_2| \le |y-x| + |x-\p_2| = s-r_1 + |r_2| \le s,$
showing that
$$\p_2\in \bar\B (y,s)=\bar \B(\p_1 + s\nu_\Sigma (\p_1),s)\subset \B(\p_1 + \a \nu_\Sigma (\p_1),\a).$$
Therefore,  
$\B(\p_1 + \a\nu_\Sigma (\p_1),\a)\cap \Sigma \neq\emptyset,$ 
contradicting the assumption in (ii). 
\end{proof}
\noindent 
(d) Suppose $\Sigma$ has a tubular neighborhood of radius $\a$. Let $\{\kappa_1,\ldots,\kappa_m\}$
be the principal curvatures of $\Sigma$, and $L_\Sigma$ the Weingarten tensor.

Then it follows from part   (c)  that
$|\kappa_1|,\ldots, |\kappa_m|\le1/ \a$ and   $|L_\Sigma|\le 1/\a.$
\end{rmk}
\goodbreak 
\noindent
In the following, we say that $\Sigma$ is a (URT)--hypersurface in $\R^{m+1}$ if
\begin{itemize}
\item[(T1)] $\Sigma$ is a smooth oriented hypersurface without boundary embedded in $\R^{m+1}$.
\item[(T2)] $(\Sigma,g)$ is uniformly regular, where $g=g_{m+1}|_{\Sigma}$ denotes the metric induced by the Euclidean metric $g_{m+1}$.
\item[(T3)] $\Sigma$  has a tubular neighborhood.
\end{itemize}

\begin{examples}
\label{ex:URT}
(a) Every smooth compact hypersurface without boundary embedded in $\R^{m+1}$ 
is a (URT)--hypersurface.

\smallskip\noindent
(b) All of the manifolds considered in \cite{LeCroneSim13, LeCroneSim16} are (URT)--hypersurfaces. 
In particular, the infinite cylinder with radius $r > 0$, 
$$
\mathcal{C}_r= \{(x,y,z)\in \R^3: y^2+z^2=r^2, \; x\in \R\},
$$
is a (URT)--hypersurface with tubular neighborhood of radius $\a = r$.

\smallskip\noindent
(c) Assume that $f: \R^m \to \R$ belongs to $BC^2(\R^m)$. Then the graph of $f$ has a tubular neighborhood of radius $\a$ for some $\a>0$.
\begin{proof}
By the inverse function theorem, there exist uniform constants $\eta>0$ and $\varepsilon>0$ such that, 
at every point $x\in \R^m$, $f|_{ \Bm(x,\eta)}$ can be expressed as the graph of a $BC^2$--function
$h_x$  over  $T_x {\rm gr}(f)$, the tangent space to the graph of $f$ at the point $(x,f(x))$,
such that the set 
$ \{(y, h_x(y)): y\in \B_{T_x {\rm gr}(f)}(0,\varepsilon)\}$ is contained in $\{(z,f(z)): z\in \B^m(x,\eta)\}$. 
Moreover,  there exists a uniform constant $c,$ independent of $x$, such that
\begin{equation}
\label{zeta_x bound}
\|h_x\|_{2,\infty} \leq c
\end{equation}
where the supremum is taken over the ball  $\B_{T_x {\rm gr}(f)}(0,\varepsilon)$.
We  refer to the proof of Claim 1 in Proposition~\ref{A-Gamma_rho URT}(b) in the Appendix for a more general situation.
Further, we have $h_x(0)=0$ and $\nabla h_x(0)=0$. 
Due to \eqref{zeta_x bound}, after 
Taylor expansion of $h_x$ around $0\in T_x {\rm gr}(f)$, we have
$$
|h_x(y)| \leq  \|\nabla_y^2 h_x \|_\infty  |y|^2,\quad y\in  \B_{T_x {\rm gr}(f)}(0,\varepsilon),
$$
for sufficiently small $\varepsilon$. Choosing $C \ge \|\nabla_y^2 h_x \|_\infty$ such that
$1/2C \leq \varepsilon,$ we define $\a := 1/2C$.  It follows that the ball
$\B^{m+1}(\a\nu_x,\a)$
lies above the graph 
$$\{(y,h_x(y)): y\in T_x {\rm gr}(f)(0,\varepsilon)\},$$ 
where $\nu_x$ is the upwards pointing unit normal of ${\rm gr}(f)$ at the point $(x,f(x))$.
An analogous argument shows that the ball $\B^{m+1}(-\a\nu_x,\a)$ lies below the graph. 

Since the constants $\varepsilon$ and $\a$ are independent of $x$, combining with Remark 3.1(c), 
this proves that ${\rm gr}(f)$ has a tubular neighborhood of radius $\a$.
\end{proof}
\goodbreak
\smallskip\noindent
(d) We refer to \cite{Ama13, Ama12, Ama15} for additional examples of uniformly regular manifolds.
In particular, embedded hypersurfaces with tame ends, considered in  \cite[Theorem 1.2]{Ama15},
are  (URT)--hypersurfaces. More precisely, 
given a compact hypersurface without boundary $B$, embedded in $\R^m$,
and  $0\leq \alpha\leq 1,$ we define 
$$
F_\alpha(B):=\{ (t,t^\alpha y): t>1, y\in B \},
$$
which we endow with the metric $g_{F_\alpha(B)}$
induced by its embedding into $\R^{m+1}$.
An embedded hypersurface $\Sigma \subset \R^{m+1}$ is said to have tame ends if
$$
\Sigma=V_0 \cup \bigcup_{i=1}^n V_i,
$$
where $(V_0, g_{m+1}|_{V_0})$ is compact and  $(V_i, g_{m+1}|_{V_i})$
is isometric to  $(F_\alpha(B), g_{F_\alpha(B)})$.
Then, $(\Sigma, g_{m+1}|_{\Sigma})$ is a (URT)--hypersurface.

In particular, when $\alpha=0$, $(\Sigma, g_{m+1}|_{\Sigma})$ has finitely many cylinder ends; 
when $\alpha=1$, $(\Sigma, g_{m+1}|_{\Sigma})$ 
has finitely many (blunt) cone ends. 

\smallskip\noindent
(e) Let
$$
\mathcal{C}_{k} = \{(x,y,z)\in \R^3: y^2+z^2=1+1/k,\; x\in \R\},\quad k\in\N.
$$
Based on part (b), the manifold $\Sigma= \bigcup_k \mathcal{C}_{k},$ endowed with the metric 
induced by $g_3$, is uniformly regular. 
But it is obvious that $(\Sigma,g)$ does not have a tubular neighborhood.

\smallskip\noindent
(f) There also exist connected uniformly regular hypersurfaces that are not (URT). 
For instance, 
we can construct a smooth connected curve $C$ in $\{(x,y): y>0\}$ such that 
$C\cap \{(x,y):  x\geq 0\}$ is compact and
$$
C\cap \{(x,y): x< 0\}= \{(x,y): y=1\}\cup  \{(x,y): y=1+e^x\}.
$$
Then $(C,g_2|_{C})$ is a uniformly regular hypersurface that is not (URT). 
One can take the product of $C$ with $\R^m$ to produce higher dimensional examples.

\medskip\noindent
Additionally, one can rotate the curve $C$ around the $x$--axis to obtain a connected 
rotationally symmetric uniformly regular hypersurface which is not (URT).

\end{examples}

\section{The surface diffusion flow}\label{Section 4}

In solving the surface diffusion flow, one seeks to find a family of (oriented) closed  hypersurfaces  
$\{\Gamma(t): t\geq 0\}$ satisfying the evolution equation
\begin{equation}
\label{S1: SDF}
\left\{\begin{aligned}
V(t)&= -\Delta_{\Gamma(t)}H_{\Gamma(t)} ,  \quad t > 0,\\
\Gamma(0)&=\Gamma_0,
\end{aligned}\right.
\end{equation}
for an initial hypersurface $\Gamma_0$.

Here, $V(t)$ denotes the velocity in the normal direction of $\Gamma$ at time $t$, $H_{\Gamma(t)}$ is 
the mean curvature of $\Gamma(t)$ (i.e., the average of the principal curvatures),
and $\Delta_{\Gamma(t)}$ is the Laplace-Beltrami operator on $\Gamma(t)$.  
We use the convention that  a sphere has {\em negative} mean curvature.
We note that this convention is in agreement with \cite{PruSim16, Shao13, Shao15},
but differs from  \cite{EscMaySim98, LeCroneSim13, LeCroneSim18}.

\medskip
In the following, we assume that  $\Sigma$ is a  (URT)--hypersurface in $\R^{m+1}$ with 
tubular neighborhood $U_\a$ and with an orientation-preserving atlas $\A:=\{( \Uk ,\varphi_\kappa): \kappa\in \K\}$ 
with $\psi_\kappa= \varphi_\kappa^{-1}$ satisfying (R1)--(R5). 
In the following,  we assume that $\Sigma$ carries the metric induced by the Euclidean 
metric $g_{m+1}$.
Finally, we assume that $\Gamma_0$ lies in $U_\a$.

For $\alpha\in (0,1)$ a fixed parameter, we define
$$
E_0:=bc^\alpha(\Sigma) \quad \text{and} \quad E_1:=bc^{4+\alpha}(\Sigma).
$$
For $\theta\in (0,1)$, let $E_{\theta}:=(E_0,E_1)_{\theta,\infty}^0$.
Taking $\mu=1/4$ and $\gamma=3/4$, it follows from Proposition~\ref{S3: Prop: interpolation of bc} that
$$
E_\mu=bc^{1+\alpha}(\Sigma) \quad \text{and} \quad E_\gamma=bc^{3+\alpha}(\Sigma).
$$

Given $\rho \in E_\mu$ with $\| \rho \|_{\infty} < \a$, 
it follows, by assumption that $\Sigma$ is (URT) with tubular
neighborhood $U_\a$, that
\begin{equation} \label{psi-rho}
\Psi_\rho:   \Sigma \to \R^{m+1}, \quad \Psi_\rho( \p)=\p + \rho( \p) \nu_{\Sigma}(\p),
\end{equation}
is a diffeomorphism from $\Sigma$ onto the $C^1$--manifold
$\Gamma_\rho := {\rm im}(\Psi_\rho)$;
see also Proposition~\ref{A-Gamma_rho URT} for additional properties of $\Gamma_\rho$.

When the temporal variable $t$ is included in $\rho$, i.e. 
$$
\rho:[0,T) \times \Sigma \to (-\a,\a),
$$ 
we can also extend $\Psi_\rho$ to $\Psi_\rho: [0,T)\times  \Sigma \to \R^{m+1}$.
In the sequel, we will omit the temporal variable $t$ in $\rho$, $\Psi_\rho$ and $\Gamma_\rho$ when the dependence on $t$ is clear from context. 

\medskip\noindent
Let us fix some notation. 
We denote by  $g_{m+1}|_{\Gamma_\rho}$ the metric induced on $\Gamma_\rho$ by the 
Euclidean metric $g_{m+1}$ of $\R^{m+1}$. 
Let $g(\rho):= \Psi_\rho^*\, (g_{m+1}|_{\Gamma_\rho})$ be the pull-back metric of $g_{m+1}|_{\Gamma_\rho}$ on $\Sigma$.

The following expression for $g(\rho)$ was derived in \cite[Formula~(23)]{PruSim13}:
\begin{equation}
\label{S3: gamma_ij}
g_{ij}(\rho)=g_{ij}-2\rho{l}_{ij}+\rho^{2}l^{r}_{i}l_{jr}+\partial_{i}\rho\partial_{j}\rho,
\end{equation}
where $l^i_j$ and $l_{ij}$ are the components of the Weingarten tensor $L_{\Sigma}$ and the second
fundamental form with respect to $g:=g_{m+1}|_\Sigma$; i.e.,
$$
 l_{ij}=-( \tau_i |\partial_j \nu_\Sigma),\quad  l^i_j= g^{ik}l_{kj}, \quad
L_\Sigma = l_{ij} \tau^i\otimes \tau^j  = l^i_j  \tau_i\otimes \tau^j,
$$
where 
$
\{\tau_1,\ldots,\tau_m\}=\{\frac{\partial} {\partial x^1},\ldots, \frac{\partial} {\partial x^m}\}
$
is a local basis of $T\Sigma$ at $\p$ and 
$\{\tau^1,\ldots,\tau^m\}=\{dx^1,\ldots, dx^m\}$ 
is the corresponding dual basis,
characterized by $(\tau^i | \tau_j)=\delta^i_j$.

We introduce an open subset of $E_\mu$ defined by
$$
V_\mu := \{\rho\in E_\mu: \, \|\rho\|_\infty <\a\}.
$$
By Remark~\ref{S3: Rmk: tubular-nbgh}(d),
the functions 
\begin{equation} \label{a(rho)-beta(rho)}
a(\rho):=(I-\rho L_{\Sigma} )^{-1}  \nabla_\Sigma \rho,\quad \beta(\rho):=[1+ |a(\rho)|^2]^{-1/2},
\end{equation}
are well--defined for all $\rho\in V_\mu$, where $\nabla_\Sigma \rho$ is the gradient vector
and $I=\tau_i\otimes \tau^i$.

It is easy to verify that
$g_{ij}(\rho)=(\tau_i| K(\rho)\tau_j)$, where
$$
K(\rho)= (I-\rho L_\Sigma)^2 + \nabla_\Sigma \rho \otimes \nabla_\Sigma \rho 
           =  (I -\rho L_\Sigma)[I + a(\rho) \otimes a(\rho)](I- \rho L_\Sigma).
$$ 
Hence, we obtain
\begin{equation}\label{g(rho)-ij}
g_{ij}(\rho)=\Big( (I -\rho L_\Sigma)\tau_i \Big| [I + a(\rho) \otimes a(\rho)](I- \rho L_\Sigma)\tau_j\Big).
\end{equation}
It follows from the well-known relation
$$[I +a\otimes a]^{-1}= I-\frac{a\otimes a}{1+|a|^2},\quad a\in\R^{m+1},$$
that $K(\rho)$ is invertible for every $\rho\in C^1(\Sigma)$ with $\|\rho\|_\infty < \a$, 
with inverse given by
$$
K^{-1}(\rho) = M_0(\rho)[I - \beta^2(\rho) a(\rho)\otimes a(\rho) ] M_0(\rho),
$$
where $M_0(\rho):= (I-\rho L_\Sigma)^{-1}$. 
We then have $g^{ij}(\rho)=(\tau^i | K^{-1}(\rho) \tau^j)$
for the components of the cotangent metric $g^*(\rho)$ on $T^*\Sigma$ induced by $g(\rho)$,
and hence
\begin{equation}\label{g(rho)-inverse-ij}
g^{ij}(\rho)=\Big( M_0(\rho) \tau^i \Big| [I - \beta^2(\rho) a(\rho)\otimes a(\rho) ] M_0(\rho) \tau^j \Big),
\end{equation}
see also \cite[Section 2.2]{PruSim16}.

When parameterizing the evolving hypersurface $\Gamma(t)=\Gamma_{\rho(t)}$ by means of a 
height function $\rho(t) \in V_1 = V_\mu \cap E_1,$ it holds that \eqref{S1: SDF} is equivalent to 
\begin{equation}
\label{S3: rho expression}
\partial_t \rho = - \frac{1}{\beta(\rho)} \Psi_\rho^* ( \Delta_{g_{m+1}|_{\Gamma_\rho}} H_{\Gamma_\rho})= - \frac{1}{\beta(\rho)}  \Delta_\rho H_\rho .
\end{equation}
Here, $\Delta_{g_{m+1}|_{\Gamma_\rho}}$ and $\Delta_\rho$ denote Laplace-Beltrami 
operators on $(\Gamma_\rho, g_{m+1}|_{\Gamma_\rho})$ and $(\Sigma, g(\rho))$, respectively. 
It was shown in \cite[Section~5]{Shao15} that
 $H_\rho:=\Psi_\rho^* H_{\Gamma_\rho}$ in each local patch $( \Uk ,\varphi_{\kappa})$ reads as
\begin{align} \label{S3: H_rho expression}
H_\rho &= \frac{\beta(\rho)}{m}\Big\{ g^{ij}(\rho) \partial_i\partial_j\rho
+ g^{ij}(\rho) ( l^k_j \partial_i \rho-\Gamma^k_{ij})\partial_k \rho   \\
& + g^{ij}(\rho) \big[r_k^l (\rho)l^k_i \partial_j \rho +
r_k^l(\rho)(\partial_j l_i^k + \Gamma^k_{jh} l_i^h -\Gamma^h_{ij} l_h^k)\rho 
+r_k^l(\rho) l^h_j l^k_h \rho\partial_i \rho \big]\Big\} \partial_l \rho \notag\\
& + \frac{\beta(\rho)}{ m}  g^{ij}(\rho) ( l_{ij} - l_{ik} l^k_j \rho), \notag
\end{align}
with $g^{ij}(\rho)$ given in \eqref{g(rho)-inverse-ij}. Here, $\Gamma^k_{ij}$ are the components of the Christoffel 
symbols of $\Sigma$ associated with the metric $g=g_{m+1}|_{\Sigma}$, and
$r^i_j(\rho)={p^i_j(\rho)}/{q^i_j(\rho)},$
where $p^i_j(\rho)$ and $q^i_j(\rho)$ are polynomials of $\rho$ with $BC^\infty$--coefficients.
Note that although \eqref{S3: H_rho expression} was derived for compact hypersurfaces in 
\cite{Shao15}, this expression still holds true for our problem as it is purely local. 

In local coordinates with respect to the atlas $\A$,  $\Delta_\rho$ is given by
\begin{equation}
\label{Beltrami-rho}
\Delta_\rho = g^{ij}(\rho)(\partial_i\partial_j -\Gamma^k_{ij}(\rho)\partial_k\;),
\end{equation}
where $\Gamma^k_{ij}(\rho)$ are the Christoffel symbols of $(\Sigma, g(\rho))$.
Here we note that the terms $\Gamma^k_{ij}(\rho)$ depend on $\rho$ and 
up to its second--order derivatives. 
 More precisely, 
$$
\displaystyle \Gamma^k_{ij}(\rho)=\frac{p^k_{ij}(\rho, \partial \rho, \partial^2 \rho)}{q^k_{ij}(\rho, \partial \rho)},
$$ 
where $p^k_{ij}$ is a polynomial of $\rho$ and its derivatives up to second order and 
$q^k_{ij}$ is a polynomial of $\rho$ and its first--order derivatives
(both polynomials having $BC^\infty$--coefficients).

By the expression above, we obtain
$$H_\rho=\frac{\beta(\rho)}{m} \ev( g^*(\rho), \nabla^2 \rho) + \text{lower order terms}$$ 
and
$$
\Delta_\rho H_\rho =\frac{1}{m} \ev( g^*(\rho)\otimes g^*(\rho), \nabla^4 \rho) + \text{lower order terms},
$$
where $\ev(\cdot,\cdot)$ denotes the complete contraction and $\nabla$ is the covariant derivative with respect to
$(\Sigma, g)$. Here and in the sequel, we will still use $\nabla$ to denote its extension to $\mathcal{T}^\sigma_\tau\Sigma$.
Note that for $u\in C^4(\Sigma)$, the tensor $\nabla^4u\in C(\Sigma,T^*\Sigma^{\otimes 4})$ can be expressed in local coordinates by
\begin{equation*}\label{nabla-4-local}
\nabla^4u = \partial_{(j)}u\; \tau^{(j)} +\sum_{\beta, (j)}  a_{\beta, (j)}\partial^\beta u \,\tau^{(j)},
\end{equation*}
with coefficients $a_{\beta, (j)}\in BC^\infty(\B^m)$, where the summation runs over all multi-indices 
$(j)=(j_1,\cdots,j_4)\in \{1,\cdots,m\}^4$ and all 
$\beta \in \N^m$ with $|\beta| \le 3$. Here we are also using
$$
\partial_{(j)}=\partial_{j_1} \partial_{j_2}\partial_{j_3}\partial_{j_4},\quad 
\tau ^{(j)}=\tau^{j_1}\otimes \tau^{j_2}\otimes \tau^{j_3}\otimes \tau^{j_4},
$$
and $\partial^{\beta} := \partial^{\beta_1}_{1}\cdots\partial^{\beta_m}_{m}$;
see for instance \cite[page 444]{Ama13}.
Hence we obtain
\begin{equation}
\label{C(g,nabla)-local}
\ev( g^*(\rho)\otimes g^*(\rho), \nabla^4 u)=
g^{ij}(\rho)g^{lm}(\rho)\partial_i\partial_j\partial_l\partial_m u 
+\sum_{ 0<|\beta|\le 3} b_\beta(\rho,\partial \rho) \partial^\beta u
\end{equation}
for each  $\rho \in BC^1(\Sigma)$ with $\|\rho\|_{\infty}<\a$ and $u\in C^4(\Sigma)$.

By defining
\begin{equation}
\label{S4: Ar Fr}
A(\rho) \rho := \frac{1}{m} \ev( g^*(\rho)\otimes  g^*(\rho), \nabla^4 \rho),\quad  F(\rho) :=  A(\rho)\rho - \frac{1}{\beta(\rho)} \Delta_\rho H_\rho,
\end{equation}
we obtain an equivalent formulation of \eqref{S1: SDF} as
\begin{equation}
\label{S3: SDF-rho}
\left\{\begin{aligned}
\partial_t \rho +A(\rho)\rho &= F(\rho)  &&\text{in}&& (0,\infty)\times \Sigma, \\
\rho(0) &=\rho_0  &&\text{in }&&\Sigma .&&
\end{aligned}\right.
\end{equation}
We note that for each $\rho\in C^1(\Sigma,\R)$ with $\|\rho\|_\infty<\a$, the mapping
$$
A(\rho): C^4(\Sigma,\R)\to C(\Sigma,\R):\quad [u\mapsto  \frac{1}{m} \ev( g^*(\rho)\otimes  g^*(\rho), \nabla^4 u)]
 $$
gives rise to  a differential operator of order 4.

\smallskip\noindent
A linear operator 
$$\cA := \sum\limits_{i=0}^l \ev(a_i, \nabla^i \;\cdot\;),\quad u\mapsto  \cA u =\sum\limits_{i=0}^l \ev(a_i, \nabla^i  u), $$
of order $l$, acting on scalar functions, is said to be \emph{uniformly strongly elliptic}  if there exist
positive constants $r,R>0$ such that the principal symbol of $\cA$, 
\begin{align*}
\hat{\sigma}\mathcal{A}^{\pi}(\p,\xi):=\ev(a_l,(-i\xi)^{\otimes l})(\p)\in \R,\quad (\p,\xi)\in \Sigma\times T_p^*\Sigma,
\end{align*}
satisfies
\begin{equation}
\label{strongly-elliptic}
r\le {\rm Re}\,  \hat{\sigma}\mathcal{A}^{\pi}(\p,\xi)\le R,
\quad \text{for all  $(\p,\xi)\in \Sigma \times  T^\ast_{\p} \Sigma$ with $|\xi|_{g^{\ast}(\p)} = 1$}.
\end{equation}
\begin{remark}
\label{normally-elliptic}
In the scalar case, it is not difficult to see that the notion of uniformly strongly elliptic is equivalent to the notion of
{uniformly normally elliptic} introduced in \cite[Section 3]{ShaoSim14}, 
see also \cite{Ama17}.
\end{remark}

In our setting, the principal symbol of  $A(\rho)$ is given by
$$
\hat{\sigma}A^\pi(\rho)(\p,\xi)= |\xi|^4_{g^*(\rho)(\p)},\quad \xi\in T_\p^* \Gamma.
$$
It follows from \eqref{g(rho)-inverse-ij} that $g^*(\rho) \sim g^*$ 
for all $\rho\in V_\mu$,  
in the sense that there exists some $c\geq 1$ such that
\begin{center}
$(1/c) |\xi |_{g^*(\p)}^2 \leq |\xi |_{g^*(\rho)(\p)}^2 \leq c|\xi|^2_{g^*(\p)} \quad$ for  any $(\p,\xi)\in \Sigma \times T^*_{\p} \Sigma$.
\end{center}
In fact, note that with $\xi=\xi_i\tau^i \in T^*_{\p}\Sigma$,  \eqref{g(rho)-inverse-ij} implies
\begin{equation*}
|\xi|^2_{g^*(\rho)(\p)}=g^{ij}(\rho)(\p)(\xi,\xi)
= |M_0(\rho)\xi| ^2(\p) -\beta^2(\rho)(a(\rho)|M_0(\rho)\xi)^2(\p).
\end{equation*}
Next, observe that
\begin{equation}\label{equivalence of metrics}
\begin{aligned}
\beta^2(\rho)|M_0(\rho)\xi|^2(\p)\le g^{ij}(\rho)(\p)(\xi,\xi)\le |M_0(\rho)\xi|^2(\p),
\end{aligned}
\end{equation}
where we employed the Cauchy-Schwarz inequality and 
$1-\beta^2(\rho)|a(\rho)|^2=\beta^2(\rho)$ for the first estimate.
It remains to observe that
$$\min\Big\{\frac{1}{(1\!- \!\rho\kappa_r(\p))^2}\Big\} g^{ij}(\p)(\xi,\xi)
\le |M_0(\rho)\xi|^2(\p)
 \le \max\Big\{\frac{1}{(1\!-\!\rho\kappa_r(\p))^2}\Big\} g^{ij}(\p)(\xi,\xi),
 $$
where $\kappa_r$ are the principal curvatures of $\Sigma$, which are bounded by $1/\a$ since
$\Sigma$ satisfies a uniform ball condition of radius $\a$. 
This shows that  $A(\rho)$ is uniformly strongly elliptic.
Remark~\ref{normally-elliptic} and \cite[Proposition~2.7, Theorem~3.7]{ShaoSim14} now imply the following result.
\begin{prop}
\label{S3: Prop P(rho) MR}
$A \in C^\omega(V_\mu, \mathcal{M}_\mu (E_1,E_0)).$
\end{prop}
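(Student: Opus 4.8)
The plan is to verify the hypotheses of the abstract perturbation result for uniformly normally elliptic operators from \cite[Proposition~2.7, Theorem~3.7]{ShaoSim14} and combine it with the analyticity of the coefficient map $\rho \mapsto A(\rho)$. The key point is that we have already shown above that $A(\rho)$ is a uniformly strongly elliptic (equivalently, by Remark~\ref{normally-elliptic}, uniformly normally elliptic) fourth-order differential operator on $\Sigma$ for every $\rho \in V_\mu$, with the ellipticity constants $r,R$ in \eqref{strongly-elliptic} chosen \emph{uniformly} over $\rho \in V_\mu$ — this uniformity comes from the bound $|\kappa_r| \le 1/\a$ on the principal curvatures together with the sandwich estimate \eqref{equivalence of metrics}, which pins $g^*(\rho)$ between fixed multiples of $g^*$ independently of $\rho$. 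Thus \cite[Theorem~3.7]{ShaoSim14} applies and yields $A(\rho) \in \mathcal{M}_\mu(E_1,E_0)$ for each $\rho \in V_\mu$, i.e.\ $(\ez(I),\ef(I))$ is a pair of maximal regularity for $A(\rho)$ in the scalar little-Hölder setting with $E_0 = bc^\alpha(\Sigma)$, $E_1 = bc^{4+\alpha}(\Sigma)$.

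For the analytic dependence, the first step is to read off from \eqref{C(g,nabla)-local} and \eqref{S4: Ar Fr} that in each local chart $A(\rho)$ has top-order coefficients $g^{ij}(\rho)g^{lm}(\rho)$ and lower-order coefficients $b_\beta(\rho,\partial\rho)$, $|\beta| \le 3$. From the explicit formulas \eqref{g(rho)-inverse-ij}, $M_0(\rho) = (I - \rho L_\Sigma)^{-1}$, and \eqref{a(rho)-beta(rho)}, every coefficient is a composition of: polynomials in $\rho$ and $\partial\rho$ with $BC^\infty$-coefficients; the analytic map $(I - \rho L_\Sigma) \mapsto (I - \rho L_\Sigma)^{-1}$ on the (uniformly open, by $|L_\Sigma| \le 1/\a$) set where $\|\rho\|_\infty < \a$; and the analytic map $t \mapsto (1+t)^{-1/2}$ entering $\beta(\rho)$. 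Hence the next step is to invoke the standard fact (see \cite{Ama13, ShaoSim14}) that pointwise multiplication and Nemytskii-type substitution by real-analytic functions act analytically on the little-Hölder scale, together with the retraction/coretraction machinery of Proposition~\ref{S3: Prop: retraction of bc}, to conclude that the map
$$
\rho \mapsto A(\rho) \in \L\big(E_1, E_0\big)
$$
is real-analytic on $V_\mu$, since $E_\mu = bc^{1+\alpha}(\Sigma)$ controls $\rho$ and $\partial\rho$ in $bc^\alpha(\Sigma)$, which is precisely the regularity appearing in the coefficients.

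The final step is to combine the two pieces: $A$ maps $V_\mu$ analytically into $\L(E_1,E_0)$, its image lies in the open subset $\mathcal{M}_\mu(E_1,E_0)$ of $\L(E_1,E_0)$ (openness of $\mathcal{M}_\mu$ in the uniformly normally elliptic class is part of \cite[Theorem~3.7]{ShaoSim14}), and therefore $A \in C^\omega(V_\mu, \mathcal{M}_\mu(E_1,E_0))$. The main obstacle is the second step: one must check carefully that the lower-order coefficients $b_\beta(\rho,\partial\rho)$ — which involve $\rho$, $\partial\rho$, and through the Christoffel symbols of $\Sigma$ also the $BC^\infty$ geometric data of the reference manifold — genuinely depend analytically on $\rho \in bc^{1+\alpha}(\Sigma)$ with values in $bc^\alpha(\Sigma)$, uniformly across the atlas $\A$; here the uniform-regularity bounds (R3)--(R5) and (L1)--(L3), which guarantee that all the $BC^\infty(\B^m)$-coefficients appearing in \eqref{C(g,nabla)-local} and in $\nabla^4$ are bounded uniformly in $\kappa$, are exactly what makes the Nemytskii estimates uniform in $\kappa$ and hence valid on $bc^s(\Sigma,V) \doteq \Re\, l_{\infty,\uf}(\boldsymbol{bc}^s)$. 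Everything else is routine bookkeeping with the function-space calculus recalled in Sections~\ref{Section 2.1}--\ref{Section 2.2}.
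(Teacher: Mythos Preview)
Your proposal is correct and follows essentially the same route as the paper: establish uniform strong ellipticity of $A(\rho)$ via the equivalence $g^*(\rho)\sim g^*$ from \eqref{equivalence of metrics}, then invoke \cite[Proposition~2.7, Theorem~3.7]{ShaoSim14} to obtain both the maximal regularity and the analytic dependence. The paper is terser---it simply cites those two results after the ellipticity computation---whereas you spell out in more detail why the coefficient map $\rho\mapsto A(\rho)$ is analytic from $V_\mu$ into $\L(E_1,E_0)$; but this extra detail is exactly what \cite[Proposition~2.7]{ShaoSim14} is recording, so the arguments coincide.
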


Next, we will verify that the operator $F$ satisfies \textbf{(H2)}. 
In each patch $(\Uk, \varphi_\kappa)$, 
we reference \cite[Section~4.4]{LeCroneSim18} and  \eqref{C(g,nabla)-local} to confirm that
the local expression for $F(\rho)$ is of the form
\begin{equation}
\label{S4: F structure}
F(\rho) =  \sum\limits_{|\eta|=3,|\tau|\leq 2} 
	c_{\eta,\tau}(\rho,\partial \rho) \, \partial^\tau \rho \, \partial^\eta \rho 
 	+ \sum\limits_{|\eta|,|\sigma|,|\tau|\leq 2} d_{\eta,\sigma,\tau}(\rho,\partial \rho) \,
 		 \partial^\eta \rho \, \partial^\sigma \rho \, \partial^\tau \rho,
\end{equation}
where $\eta, \tau, \sigma \in \mathbb{N}^m$ are multi--indices of length
$|\eta| := \eta_1 + \cdots + \eta_m$ and $\partial^{\eta} := \partial^{\eta_1}_{x_1}\cdots\partial^{\eta_m}_{x_m}$
is the mixed partial derivative operator in local coordinates.
The coefficient functions $c_{\eta,\tau}$ and $d_{\eta,\tau,\sigma}$ depend analytically on 
$\rho$ and its first--order derivatives. 
In the sequel, for a function $u:\Sigma\to \R$, we define $u_\kappa:= \zeta \kf u$.
Let $\rho_0\in V_\mu$. For $R>0$, we choose 
$\rho_1, \rho_2 \in \bar{\B}_{E_\mu}(\rho_0,R)\cap (V_\mu\cap E_\gamma)$.
By Proposition~\ref{S3: Prop: retraction of bc}, we have
\begin{align}
\notag 
\|F(\rho_1) -F(\rho_2)\|_{\alpha,\infty} &\leq C \|\Rc F(\rho_1) - \Rc F(\rho_2) \|_{l_\infty(\boldsymbol{BC}^\alpha)}\\
\label{S3: SDF est for Q}
&= C \sup\limits_{\kappa\in \K} \|\Rck F(\rho_1)  - \Rck F(\rho_1) \|_{\alpha,\infty}.
\end{align}
In the following computations,  $\tilde{C}$ denotes a generic constant depending only on $R$ 
and $\|\rho_0\|_{1+\alpha,\infty}$.
In every patch $(\Uk, \varphi_\kappa)$, by the discussion in \cite[Section~4.1]{LeCroneSim18}, 
we have the following estimate. 
\begin{align}
\notag \|\Rck &F(\rho_1)  - \Rck F(\rho_1) \|_{\alpha,\infty}\\
\label{S3: SDF est 1}
\leq & \, \tilde{C} \, \| \rho_{1,\kappa} - \rho_{2,\kappa}\|_{3+\alpha,\infty}\\
\notag + & \,\tilde{C} \Big( \|\rho_{1,\kappa} \|_{3+\alpha,\infty}\| \rho_{1,\kappa} - \rho_{2,\kappa}\|_{2+\alpha,\infty}  
+ \|\rho_{2,\kappa} \|_{2+\alpha,\infty}\| \rho_{1,\kappa} - \rho_{2,\kappa}\|_{3+\alpha,\infty}\\
\label{S3: SDF est 2}
& + \|\rho_{2,\kappa} \|_{2+\alpha,\infty} \|\rho_{2,\kappa} \|_{3+\alpha,\infty}\| \rho_{1,\kappa} - \rho_{2,\kappa}\|_{1+\alpha,\infty} \Big) \\
\notag + & \, \tilde{C} \Big[ ( \|\rho_{1,\kappa} \|_{2+\alpha,\infty} + \|\rho_{2,\kappa} \|_{2+\alpha,\infty} )\| \rho_{1,\kappa} - \rho_{2,\kappa}\|_{1+\alpha,\infty}  \\
\label{S3: SDF est 3}
& +  \| \rho_{1,\kappa} - \rho_{2,\kappa}\|_{2+\alpha,\infty} + \|\rho_{2,\kappa} \|_{2+\alpha,\infty}  \| \rho_{1,\kappa} - \rho_{2,\kappa}\|_{1+\alpha,\infty} \Big] \\
\notag + &  \, \tilde{C}  \Big( \|\rho_{1,\kappa} \|^2_{2+\alpha,\infty}   \| \rho_{1,\kappa} - \rho_{2,\kappa}\|_{1+\alpha,\infty}  +  \|\rho_{1,\kappa} \|_{2+\alpha,\infty}  \| \rho_{1,\kappa} - \rho_{2,\kappa}\|_{2+\alpha,\infty}    \\
& + \|\rho_{2,\kappa} \|_{2+\alpha,\infty}   \| \rho_{1,\kappa} - \rho_{2,\kappa}\|_{2+\alpha,\infty}  
\label{S3: SDF est 4}
+ \|\rho_{2,\kappa} \|^2_{2+\alpha,\infty}   \| \rho_{1,\kappa} - \rho_{2,\kappa}\|_{1+\alpha,\infty} \Big)\\
\notag + &  \, \tilde{C}  \Big[( \|\rho_{1,\kappa} \|_{2+\alpha,\infty} +\|\rho_{2,\kappa} \|_{2+\alpha,\infty})^2  \| \rho_{1,\kappa} - \rho_{2,\kappa}\|_{2+\alpha,\infty} \\
\label{S3: SDF est 5}
&+ \|\rho_{2,\kappa} \|_{2+\alpha,\infty}^3 \| \rho_{1,\kappa} - \rho_{2,\kappa}\|_{1+\alpha,\infty}\Big].
\end{align}

the definitions of the spaces $E_0, E_\mu, E_\gamma,$ and $E_1,$ 
we have $\mu = 1/4$ and $\gamma = 3/4$ in our current setting. Thus, we refer back to 
\eqref{S2: structure reg 1} to see that index $j$ is subcritical when $(\varrho_j, \gamma_j)$
satisfies $\varrho_j/2+\gamma_j < 1$, and $j$ is critical when $\varrho_j/2+\gamma_j = 1$.
It follows from Proposition~\ref{S3: Prop: retraction of bc} that \eqref{S3: SDF est 1} is bounded by
\begin{align*}
\| \rho_{1,\kappa} - \rho_{2,\kappa}\|_{3+\alpha,\infty}   \leq \sup\limits_{\eta\in \K} \| \rho_{1,\eta} - \rho_{2,\eta}\|_{3+\alpha,\infty} \leq \tilde{C}  \|\rho_1 - \rho_2 \|_{E_\gamma}.
\end{align*}
This corresponds to $(\varrho_j,\gamma_j)=(0, 3/4)$, which is subcritical. 
In \eqref{S3: SDF est 2}, similarly it holds
\begin{align*}
 \|\rho_{1,\kappa} \|_{3+\alpha,\infty}\| \rho_{1,\kappa} - \rho_{2,\kappa}\|_{2+\alpha,\infty}  
\leq \tilde{C} \|\rho_1 \|_{E_\gamma}\| \rho_1 - \rho_2 \|_{2+\alpha,\infty}  
\end{align*}
This corresponds to $(\varrho_j,\gamma_j)=(1, 1/2)$ (which is again subcritical). 
We can estimate the remaining terms of \eqref{S3: SDF est 2} by using 
Propositions~\ref{S3: Prop: retraction of bc} and \ref{S3: Prop: interpolation of bc} 
\begin{align*}
\|\rho_{2,\kappa} &\|_{2+\alpha,\infty}\| \rho_{1,\kappa} - \rho_{2,\kappa}\|_{3+\alpha,\infty}
+  \|\rho_{2,\kappa} \|_{2+\alpha,\infty} \|\rho_{2,\kappa} \|_{3+\alpha,\infty}\| \rho_{1,\kappa} - \rho_{2,\kappa}\|_{1+\alpha,\infty} \\
\leq & \, \tilde{C} \Big( \|\rho_2\|^{1/2}_{E_\gamma}\| \rho_1 - \rho_2\|_{E_\gamma} +   \|\rho_2\|^{3/2}_{E_\gamma}\| \rho_1 - \rho_2\|_{E_\mu}  \Big)
\end{align*}
These correspond to $(\varrho_j,\gamma_j)=(1/2, 3/4)$ and  $(\varrho_j,\gamma_j)=(3/2, 1/4)$, which are critical. 
The remaining terms, i.e. \eqref{S3: SDF est 3}-\eqref{S3: SDF est 5}, can be estimated similarly, 
cf. \cite[Section~4]{LeCroneSim18}. We conclude that \eqref{S3: SDF est 3}-\eqref{S3: SDF est 5} 
is bounded by
\begin{align*}
\tilde{C} & \, \Big[ \big(\|\rho_1\|^{1/2}_{E_\gamma} +  \|\rho_2\|^{1/2}_{E_\gamma} \big)
	\|\rho_1-\rho_2\|_{E_\mu} +\|\rho_1-\rho_2\|_{2+\alpha,\infty}  \\ 
&  + \big(\|\rho_1\|_{E_\gamma}+ \|\rho_2\|_{E_\gamma} \big) \|\rho_1-\rho_2\|_{E_\mu} + 
	\big( \|\rho_1\|^{1/2}_{E_\gamma} + \|\rho_2\|^{1/2}_{E_\gamma} \big)
	\|\rho_1-\rho_2\|_{2+\alpha,\infty}\\
& + \big(\|\rho_1\|_{E_\gamma} + \|\rho_2\|_{E_\gamma} \big) \|\rho_1-\rho_2\|_{2+\alpha,\infty} + \|\rho_2\|^{3/2}_{E_\gamma}\|\rho_1-\rho_2\|_{E_\mu} \Big].
\end{align*}
The indices for those estimates are $(\varrho_j,\gamma_j)=(1/2, 1/4),  (0, 1/2), (1, 1/4),  (1/2, 1/2)$
(subcritical), and $(\varrho_j,\gamma_j)=(1, 1/2), (3/2, 1/4)$ (critical), respectively.

Additionally, it follows from \eqref{S2: infnty,uf} 
that $ \Rc F(\rho) \in  l_{\infty,\uf} (\boldsymbol{bc}^\alpha)$ for any 
$\rho\in V_\mu\cap E_\gamma$. 
Since the constant $\tilde{C}$ is independent of $\kappa$, the above computations together 
with \eqref{S3: SDF est for Q} imply that $F$ satisfies \textbf{(H2)} and
$$
	F\in C^\omega(V_\mu\cap E_\gamma, E_0).
$$

Combining the above discussions, we apply Theorem~\ref{S2: Thm MR} to 
produce the following well--posedness result for \eqref{S3: SDF-rho}.
Note that we assume throughout that $\Sigma$ carries the metric induced by the Euclidean 
metric $g_{m+1}$.

\begin{theorem}
\label{S3: Thm SDF well-posedness}
Let $\alpha\in (0,1)$, $\mu=1/4$ and $ \Sigma$ be a (URT)--hypersurface in $\R^{m+1}$
with a tubular neighborhood of radius $\a$. 
\begin{itemize}
\item[(a)] 
Then for any
$
\rho_0\in V_\mu := \{\rho\in bc^{1+\alpha}(\Sigma): \, \|\rho\|_\infty <\a\},  
$
\eqref{S3: SDF-rho} has a unique solution  
$$
\rho(\cdot,\rho_0)\in C_{1-\mu}(J, bc^{4+\alpha}(\Sigma)) \cap C^1_{1-\mu}(J, bc^\alpha(\Sigma))
$$
on a maximal interval $J=[0,T)=[0,T(\rho_0)),$ with the additional property that
$
\rho(\cdot,\rho_0)\in C(J, bc^{1+\alpha}(\Sigma)).
$

\item[(b)] 
$$
\mathcal{M}:=\bigcup_{t\in(0,T)}(\{t\}\times\Gamma(t))
$$
is a $C^\infty$--hypersurface in $\R^{m+2}$. In particular, each manifold $\Gamma(t)$ is 
$C^\infty$ for $t\in(0,T)$. 
If, in addition, $\Sigma$ is  $C^\omega$--uniformly regular, then 
$\mathcal{M}$ is a $C^\omega$--hypersurface in $\R^{m+2}$.
\vspace{1mm}
\item[(c)]
The map $[(t, \rho_0) \mapsto \rho(t,\rho_0)]$ defines a semiflow on $V_{\mu}$ which is
analytic for $t > 0$ and Lipschitz continuous for $t \ge 0$.
\end{itemize}

\end{theorem}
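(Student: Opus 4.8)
The plan is to derive Theorem~\ref{S3: Thm SDF well-posedness} from the abstract result Theorem~\ref{S2: Thm MR}, having already checked the structural hypotheses \textbf{(H1)} and \textbf{(H2)} in the preceding paragraphs. Concretely, recall that we have chosen $E_0 = bc^\alpha(\Sigma)$, $E_1 = bc^{4+\alpha}(\Sigma)$, $\mu = 1/4$, $\gamma = 3/4$, so that $E_\mu = bc^{1+\alpha}(\Sigma)$ and $E_\gamma = bc^{3+\alpha}(\Sigma)$ by Proposition~\ref{S3: Prop: interpolation of bc}, and $V_\mu = \{\rho \in E_\mu : \|\rho\|_\infty < \a\}$ is open in $E_\mu$. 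Proposition~\ref{S3: Prop P(rho) MR} gives $A \in C^\omega(V_\mu, \mathcal{M}_\mu(E_1,E_0))$, and the chain of estimates culminating in \eqref{S3: SDF est 5} together with $F \in C^\omega(V_\mu \cap E_\gamma, E_0)$ shows that the pair $(A, F)$ satisfies \textbf{(H1)} (with $F_1 = 0$) and \textbf{(H2)} (with $F_2 = F$). Hence Theorem~\ref{S2: Thm MR}(a),(b) applies verbatim to \eqref{S3: SDF-rho} and yields part (a): local existence and uniqueness of a solution $\rho(\cdot,\rho_0) \in \ef([0,\tau]) = BC_{1-\mu}([0,\tau], bc^{4+\alpha}(\Sigma)) \cap BC^1_{1-\mu}([0,\tau], bc^\alpha(\Sigma))$, continuation to a maximal interval $J = [0,T(\rho_0))$, and the regularity $\rho(\cdot,\rho_0) \in C(J, E_\mu) \cap C((0,T), E_1)$; rewriting $E_\mu, E_1$ in terms of little-Hölder spaces gives the stated conclusion. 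Part (c) — the semiflow property, analyticity for $t>0$, Lipschitz dependence for $t \ge 0$ — follows from the Lipschitz estimate in Theorem~\ref{S2: Thm MR}(a), uniqueness, and the standard parameter-trick/implicit function theorem argument for quasilinear parabolic equations with $C^\omega$ coefficients; I would cite \cite{Shao15, ShaoSim14, LeCroneSim18} (or the parameter-trick as in \cite{PruSim16}) rather than reproving it.

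The substantive new content is part (b): parabolic smoothing on the space-time manifold. The strategy here is the parameter trick (continuous-maximal-regularity version of the Angenent/Escher--Prüss--Simonett bootstrapping argument). First I would establish $C^\infty$-smoothing in the spatial variable on any compact subinterval $[t_0, t_1] \subset (0,T)$: shift time so that $\rho(t_0,\rho_0) \in E_1 = bc^{4+\alpha}(\Sigma)$ (using $C((0,T),E_1)$), then for $\lambda$ in a neighborhood of $1$ in $\R$ (or a small ball in $\R^N$ for a family of scaling/translation parameters adapted to the atlas charts) consider the rescaled function $\rho_\lambda(t) := \rho(\lambda t)$, which solves a parabolic problem with the same structural properties and coefficients depending analytically on $\lambda$. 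Since the data depend $C^\omega$ (indeed analytically) on $\lambda$ and the linearization at $\lambda = 1$ is an isomorphism between the maximal-regularity spaces, the implicit function theorem yields that $\lambda \mapsto \rho_\lambda$ is analytic into $\ef$; differentiating in $\lambda$ at $\lambda = 1$ and using $\partial_\lambda \rho_\lambda|_{\lambda=1} = t\,\partial_t\rho$ produces additional time-regularity, and iterating (together with the equation, which converts time derivatives into spatial derivatives of order $4$) bootstraps to $\rho \in C^\infty((0,T)\times\Sigma)$, hence $C^\infty$ in the combined variable. For the analytic case, one uses instead a vector of translation parameters in each chart $\varphi_\kappa$ and a holomorphic extension of the resulting analytic map, exactly as in \cite[Section~5]{Shao15} or \cite{ShaoSim14}; the uniformity over $\kappa \in \K$ needed for the non-compact setting is supplied by the uniform-regularity estimates (R3)--(R5), (L1)--(L3) and the fact that the constant $\tilde C$ in the (H2)-estimates is independent of $\kappa$. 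Finally, $\mathcal{M} = \bigcup_{t\in(0,T)}(\{t\}\times\Gamma(t))$ is the image of $(t,\p)\mapsto (t, \p + \rho(t,\p)\nu_\Sigma(\p))$, which is a $C^\infty$ (resp. $C^\omega$) immersion once $\rho$ is, and it is an embedding because each $\Psi_{\rho(t)}$ is a diffeomorphism onto $\Gamma(t)$ (Appendix, Proposition~\ref{A-Gamma_rho URT}) and the time-slices are disjoint.

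I expect the main obstacle to be making the parameter trick genuinely uniform across the infinitely many charts in the non-compact case. In the compact setting of \cite{Shao15} one works with a finite atlas and the implicit function theorem is applied once; here the localized problem lives in $l_\infty(\boldsymbol{bc}^s)$-type spaces, and I must ensure that (i) the rescaled/translated coefficient fields still satisfy the $l_\infty$-uniform bounds with constants independent of $\kappa$, (ii) the maximal-regularity isomorphism $(\partial_t + A'(\rho)[\,\cdot\,] , \gamma_0)$ of the linearization holds in the $\ef/\ez$-spaces built on these sequence spaces with $\kappa$-uniform norms, and (iii) the radius of the ball on which the implicit function theorem converges does not shrink to zero as one passes through the charts. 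Items (i) and (iii) are handled by the structural estimates already assembled (the $\tilde C$'s depend only on $R$ and $\|\rho_0\|_{1+\alpha,\infty}$, not on $\kappa$), and (ii) is precisely the content of \cite[Theorem~3.7]{ShaoSim14} combined with Proposition~\ref{S3: Prop: retraction of bc}; so the obstacle is real but is dispatched by citing the machinery of \cite{Shao15, ShaoSim14} and checking that the $\kappa$-uniformity is preserved at each step. A secondary (routine) point is verifying that $\partial_\lambda\rho_\lambda|_{\lambda=1}$ really equals $t\partial_t\rho(t)$ in the weighted spaces $BC_{1-\mu}$, which follows from the definition of these spaces and the chain rule.
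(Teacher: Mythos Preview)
Your proposal is correct and follows essentially the same route as the paper: part~(a) is Theorem~\ref{S2: Thm MR} applied after the verification of \textbf{(H1)}--\textbf{(H2)}, part~(b) is precisely the parameter-trick argument of \cite[Sections~3 and~5]{Shao15} (which the paper simply cites, while you have sketched it), and part~(c) is handled by citing \cite{LeCroneSim18} for Lipschitz continuity together with an implicit-function-theorem argument for analyticity. The only place the paper is slightly more specific than you is in part~(c): rather than invoking a parameter trick directly, it observes that $\rho(\tau,\rho_0)\in V_\gamma$ for any $\tau>0$, so that \cite[Theorem~6.1]{CS01} applies in the non-singular regime $F\in C^\omega(V_\gamma,E_0)$, and the analyticity of the semiflow on $V_\mu$ then follows by embedding.
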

\begin{proof}
We have already proved part (a) above. Part (b) follows directly from the argument in 
\cite[Sections~3 and 5]{Shao15}. 
For part (c), we first note that Lipschitz continuity of the semiflow follows
from \cite[Corollary~2.3]{LeCroneSim18}. Regarding additional regularity of the semiflow;
for any $\tau > 0$, we note that
$$
\rho(\tau, \rho_0) \in V_\gamma = bc^{3+\alpha}(\Sigma) \cap [\| \rho \|_{\infty} < \a],
$$
and so the result holds in $V_\gamma$ because of \cite[Theorem~6.1]{CS01} and the mapping properties 
of $A(\cdot)$ and $F(\cdot)$. Regularity of the semiflow in $V_\mu$ then follows by embedding.

\end{proof}

\section{The Willmore flow}\label{Section 5}
In this section, we take $\Sigma$ to be a (URT)--hypersurface in  $\R^3$.
For the Willmore flow, we seek a family of hypersurfaces $\{ \Gamma(t) : t \ge 0 \}$
satisfying the evolution equation
\begin{equation}
\label{S5: WF}
\left\{ \begin{aligned}
	V(t) &= -\Delta_{\Gamma(t)}H_{\Gamma(t)} - 2H_{\Gamma(t)} 
		\big( H_{\Gamma(t)}^2 - K_{\Gamma(t)}\big) , \quad t > 0,\\
	\Gamma(0) &= \Gamma_0,
\end{aligned}\right.
\end{equation}
where the term $K_{\Gamma(t)}$ denotes Gaussian curvature of $\Gamma(t)$.

Working in the same setting as Section~\ref{Section 4} above, we consider \eqref{S5: WF}
acting on surfaces $\Gamma(t) = \Gamma_{\rho(t)}$ defined over $\Sigma$ via height
functions $\rho(t) : \Sigma \to \R.$ 
Assuming that $\Sigma$ has a tubular neighborhood $U_\a$ of radius $\a > 0$, 
we recall that $E_\mu = bc^{1+\alpha}(\Sigma)$ and $E_\gamma = bc^{3 + \alpha}(\Sigma)$
are interpolation spaces between $E_0 := bc^{\alpha}(\Sigma)$ and $E_1 := bc^{4 + \alpha}(\Sigma)$,
and we consider initial functions from $V_\mu := \{ \rho \in E_\mu : \| \rho \|_\infty < \a \}$.

Treating \eqref{S5: WF} as a lower--order perturbation of \eqref{S1: SDF}, we again
define
\[
	A(\rho): E_1 \to E_0 : \quad
	[u \mapsto  \frac{1}{m} \ev( g^*(\rho)\otimes  g^*(\rho), \nabla^4 u)]
\]
for all $\rho \in V_\mu,$
and we introduce the mapping $Q: V_\mu \cap E_\gamma \to E_0$
defined as
\[
\begin{split}
	Q(\rho) &:= A(\rho) \rho - \frac{1}{\beta(\rho)} 
		\Psi_\rho^* \Big(\Delta_{g_3|\Gamma_\rho} H_{\Gamma_\rho} + 2 H_{\Gamma_\rho} 
		\big(	H_{\Gamma_\rho}^2 - K_{\Gamma_\rho} \big) \Big)\\
		&= A(\rho)\rho - \frac{1}{\beta(\rho)} \big( \Delta_\rho H_\rho + 
		2H_\rho ( H_{\rho}^2 - K_{\rho})\big).
\end{split}
\]
We thus arrive at the following expression for \eqref{S5: WF} in our current setting:
\begin{equation}
\label{S5: WF-rho}
	\left\{\begin{aligned}
	\partial_t \rho +A(\rho)\rho &= Q(\rho)  &&\text{in}&& (0,\infty)\times \Sigma, \\
	\rho(0) &=\rho_0  &&\text{in }&&\Sigma .&&
	\end{aligned}\right.
\end{equation}
By Proposition~\ref{S3: Prop P(rho) MR}, we know that 
$A \in C^{\omega}(V_\mu, \mathcal{M}_\mu(E_1,E_0))$
so we focus on showing regularity and structural properties for $Q(\rho)$.

By \eqref{S4: Ar Fr} and the definition of $Q(\rho),$ we note that
\[
	Q(\rho) = F(\rho) - \frac{2}{\beta(\rho)} H_\rho^3 + \frac{2}{\beta(\rho)} H_\rho K_\rho 
\]
and it follows that the local expression for $Q(\rho)$ is of the form
\[
	Q(\rho) = 
		\sum\limits_{|\eta|=3,|\tau|\leq 2} 
		c_{\eta,\tau}(\rho,\partial \rho) \, \partial^\tau \rho \, \partial^\eta \rho 
 		+ \sum\limits_{|\eta|,|\sigma|,|\tau|\leq 2} d_{\eta,\sigma,\tau}(\rho,\partial \rho) \,
 			 \partial^\eta \rho \, \partial^\sigma \rho \, \partial^\tau \rho .
\]
To confirm this local expression for $Q(\rho),$ we first note that all third--order
derivatives of $\rho$ appear in $F(\rho)$, while the terms $\frac{2}{\beta(\rho)} H_\rho^3$ 
and $\frac{2}{\beta(\rho)} H_\rho K_\rho$ depend only on up to second--order derivatives.
With the structure for $F(\rho)$ already established in \eqref{S4: F structure}, it suffices to
confirm that $Q(\rho)$ only contributes additional terms of the form
\[
	\sum\limits_{|\eta|,|\sigma|,|\tau|\leq 2} d_{\eta,\sigma,\tau}(\rho,\partial \rho) \,
 			 \partial^\eta \rho \, \partial^\sigma \rho \, \partial^\tau \rho .
\]

Local expressions for $\beta(\rho)$ and $H_\rho$ are given in \eqref{a(rho)-beta(rho)} and 
\eqref{S3: H_rho expression}, respectively.
Since $\beta(\rho)$ depends on at most first--order derivatives of $\rho$ and 
$H_\rho$ depends linearly on second--order derivatives, 
we see that at most cubic powers of $\partial^2 \rho$ appear in 
$\frac{2}{\beta(\rho)} H_\rho^3$. 

Regarding the term $(2 / \beta(\rho)) H_\rho K_\rho,$ we first express Gaussian curvature
\[
	K_\rho = \text{det}[g^{ki}(\rho) l_{ij}(\rho)],
\]
as derived in \cite[Section~2]{Shao13}.
Here $l_{ij}(\rho)$ are the components of the pull--back of the second fundamental 
form of $\Gamma_\rho$. It follows from~\eqref{S3: H_rho expression} that

\begin{align*} \label{S3: L_rho expression}
l_{ij}(\rho) &= \beta(\rho)\Big\{ \partial_i\partial_j\rho
+ ( l^k_j \partial_i \rho-\Gamma^k_{ij})\partial_k \rho   \\
& + \big[r_k^l (\rho)l^k_i \partial_j \rho +
r_k^l(\rho)(\partial_j l_i^k + \Gamma^k_{jh} l_i^h -\Gamma^h_{ij} l_h^k)\rho 
+r_k^l(\rho) l^h_j l^k_h \rho\partial_i \rho \big]\Big\} \partial_l \rho \notag\\
& + \beta(\rho)  ( l_{ij} - l_{ik} l^k_j \rho). \notag
\end{align*}
Observing that each   $l_{ij}(\rho)$ is linear with respect to $\partial^2 \rho$,
it follows that $\partial^2 \rho$ appears at most quadratically in 
$\text{det}[l_{ij}(\rho)],$
since it is a $2 \times 2$ matrix.
Therefore, we conclude that $K_\rho$ contains at most 
quadratic factors of $\partial^2 \rho$ and thus, multiplying with the  
second--order quasilinear term $H_\rho$, 
we conclude that the term $(2 / \beta(\rho)) H_\rho K_\rho$ contains at most cubic powers of 
$\partial^2 \rho.$

With confirmation that $Q(\rho)$ satisfies the same structural condition \eqref{S2: structure reg 2}
as $F(\rho)$ in Section~\ref{Section 4}, we employ the same argument outlined in 
\eqref{S3: SDF est for Q}--\eqref{S3: SDF est 5} to conclude that $(A,Q)$ satisfies 
conditions {\bf (H1)--(H2)}. The following well--posedness result for \eqref{S5: WF}
then follows from Theorem~\ref{S2: Thm MR}.

\begin{theorem}
\label{S5: Thm WF well-posedness}
Let $\alpha\in (0,1)$, $\mu=1/4$ and $\Sigma$ be a (URT)--hypersurface in $\R^3$
with tubular neighborhood of radius $\a$.
\begin{itemize}
\item[(a)] Then for any
$
	\rho_0\in V_\mu := \{\rho\in bc^{1+\alpha}(\Sigma): \, \|\rho\|_\infty <\a\},  
$
\eqref{S5: WF-rho} has a unique solution  
$$
	\rho(\cdot,\rho_0)\in C_{1-\mu}(J, bc^{4+\alpha}(\Sigma)) \cap C^1_{1-\mu}(J, bc^\alpha(\Sigma))
$$
on a maximal interval $J=[0,T)=[0,T(\rho_0)),$ with the additional property that
$
	\rho(\cdot,\rho_0)\in C(J, bc^{1+\alpha}(\Sigma)).
$
\item[(b)]
$$
\mathcal{M}:=\bigcup_{t\in(0,T)}(\{t\}\times\Gamma(t))
$$
is a $C^\infty$--hypersurface in $\R^4$. In particular, each manifold $\Gamma(t)$ is 
$C^\infty$ for $t\in(0,T)$. 
If, in addition, $\Sigma$ is $C^\omega$--uniformly regular, then 
$\mathcal{M}$ is a $C^\omega$--hypersurface in $\R^{m+2}$.
\vspace{1mm}
\item[(c)]
The map $[(t, \rho_0) \mapsto \rho(t,\rho_0)]$ defines a semiflow on $V_{\mu}$ which is 
analytic for $t > 0$ and Lipschitz continuous for $t \ge 0$.
\end{itemize}
\end{theorem}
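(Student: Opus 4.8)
The plan is to derive Theorem~\ref{S5: Thm WF well-posedness} from Theorem~\ref{S2: Thm MR}, applied to \eqref{S5: WF-rho} written in the abstract form \eqref{S2: Abstract EE} with $F_1\equiv0$ and $F_2=Q$. Condition \textbf{(H1)} then reduces to $A\in C^{1-}(V_\mu,\mathcal{M}_\mu(E_1,E_0))$, which is contained in Proposition~\ref{S3: Prop P(rho) MR} (indeed $A\in C^\omega$). The substantive point is \textbf{(H2)}, and the decisive observation is that the Willmore correction $-\tfrac{2}{\beta(\rho)}\big(H_\rho^3-H_\rho K_\rho\big)$ contributes to $Q(\rho)$ only terms of the \emph{second} type already present in \eqref{S4: F structure}, i.e.\ cubic expressions $d_{\eta,\sigma,\tau}(\rho,\partial\rho)\,\partial^\eta\rho\,\partial^\sigma\rho\,\partial^\tau\rho$ with $|\eta|,|\sigma|,|\tau|\le2$. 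To see this one notes that $\beta(\rho)$ depends only on $\rho$ and $\partial\rho$, that $H_\rho$ is quasilinear of second order (linear in $\partial^2\rho$), and that $K_\rho=\det[g^{ki}(\rho)l_{ij}(\rho)]$, being the determinant of a $2\times2$ matrix whose entries are linear in $\partial^2\rho$ (since $g^{ki}(\rho)$ and the coefficients of $l_{ij}(\rho)$ involve only $\rho$ and $\partial\rho$), is at most quadratic in $\partial^2\rho$; multiplying out, $H_\rho^3$ and $H_\rho K_\rho$ carry at most cubic powers of $\partial^2\rho$ with coefficients depending analytically on $(\rho,\partial\rho)$. Hence $Q(\rho)$ has exactly the same local form as $F(\rho)$.

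With this structural identity I would re-run the patch-wise chain of estimates \eqref{S3: SDF est for Q}--\eqref{S3: SDF est 5} verbatim with $Q$ in place of $F$, using Propositions~\ref{S3: Prop: retraction of bc} and~\ref{S3: Prop: interpolation of bc} to convert the local H\"older norms into powers of $\|\cdot\|_{E_\gamma}$ and $\|\cdot\|_{E_\mu}$. This produces \eqref{S2: structure reg 2} with the same list of exponent pairs $(\varrho_j,\gamma_j)$ as in the surface diffusion case; since $\mu=1/4$ and $\gamma=3/4$, each satisfies $\varrho_j/2+\gamma_j\le1$, i.e.\ \eqref{S2: structure reg 1}, with equality precisely for the critical pairs $(1/2,3/4)$, $(3/2,1/4)$ and $(1,1/2)$. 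Uniformity of the constant in $\kappa$ and the fact that $\Rc Q(\rho)\in l_{\infty,\uf}(\boldsymbol{bc}^\alpha)$ for $\rho\in V_\mu\cap E_\gamma$ then give $Q\in C^\omega(V_\mu\cap E_\gamma,E_0)$ and \textbf{(H2)}. Part~(a) now follows from Theorem~\ref{S2: Thm MR}(a),(b): writing $J=[0,T(\rho_0))$ for the maximal interval and unwinding the definition of $\ef(J)$ on this half-open interval gives $\rho(\cdot,\rho_0)\in C_{1-\mu}(J,bc^{4+\alpha}(\Sigma))\cap C^1_{1-\mu}(J,bc^\alpha(\Sigma))$, while continuity into $E_\mu=bc^{1+\alpha}(\Sigma)$ is part of Theorem~\ref{S2: Thm MR}(b), and uniqueness in this class is built into the abstract statement together with a standard continuation argument.

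For part~(b) I would argue exactly as in the proof of Theorem~\ref{S3: Thm SDF well-posedness}(b), following \cite[Sections~3 and~5]{Shao15}: the coefficients of \eqref{S5: WF-rho} depend analytically on $\rho$ and its derivatives and $A(\rho)$ is uniformly strongly elliptic on $V_\mu$, so the parameter-trick bootstrap (applied to the group of localizing diffeomorphisms, together with the analytic dependence of $(A,Q)$) shows that $(t,\p)\mapsto(t,\Psi_{\rho(t)}(\p))$ is a $C^\infty$ parameterization, and $C^\omega$ when $\Sigma$ is $C^\omega$-uniformly regular; hence $\mathcal{M}$ is a $C^\infty$ (resp.\ $C^\omega$) hypersurface in $\R^4$ and each $\Gamma(t)$, $t\in(0,T)$, is $C^\infty$ (resp.\ $C^\omega$). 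The only change from the surface diffusion case is the extra analytic lower-order term, which the argument absorbs without modification. For part~(c): uniqueness from part~(a) gives the cocycle identity, and the Lipschitz estimate in Theorem~\ref{S2: Thm MR}(a) combined with a covering/continuation argument (as in \cite[Corollary~2.3]{LeCroneSim18}) shows that $[(t,\rho_0)\mapsto\rho(t,\rho_0)]$ is Lipschitz for $t\ge0$; analyticity for $t>0$ follows by observing that $\rho(\tau,\rho_0)\in V_\gamma=bc^{3+\alpha}(\Sigma)\cap[\|\rho\|_\infty<\a]$ for every $\tau\in(0,T(\rho_0))$, invoking the analytic-semiflow result \cite[Theorem~6.1]{CS01} in $V_\gamma$ via the mapping properties of $A(\cdot)$ and $Q(\cdot)$, and transferring the conclusion back to $V_\mu$ through the continuous embedding $V_\gamma\hookrightarrow V_\mu$.

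I expect the only genuinely substantive step to be the bookkeeping in \textbf{(H2)}, namely checking that no term generated by $H_\rho^3$ or $H_\rho K_\rho$ pushes the derivative count above the third-order threshold built into \eqref{S4: F structure} or produces an exponent pair violating \eqref{S2: structure reg 1}. Once the determinantal structure of $K_\rho$ is identified, this collapses to the same critical/subcritical table already worked out in Section~\ref{Section 4}, and everything else is a direct invocation of the abstract machinery, mirroring the surface diffusion case step by step.
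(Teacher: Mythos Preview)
Your proposal is correct and follows essentially the same approach as the paper: the key observation that the Willmore correction $-\tfrac{2}{\beta(\rho)}(H_\rho^3 - H_\rho K_\rho)$ only contributes terms of the form $d_{\eta,\sigma,\tau}(\rho,\partial\rho)\,\partial^\eta\rho\,\partial^\sigma\rho\,\partial^\tau\rho$ with $|\eta|,|\sigma|,|\tau|\le 2$ (via the $2\times 2$ determinant structure of $K_\rho$), so that $Q$ inherits the local form \eqref{S4: F structure} and the estimates \eqref{S3: SDF est for Q}--\eqref{S3: SDF est 5} go through verbatim, is exactly what the paper does. The paper's written proof is even more terse, simply citing \cite{Shao13} and \cite[Section~3]{Shao15} for part~(b) and referring back to the proof of Theorem~\ref{S3: Thm SDF well-posedness}(c) for part~(c); your outline fills in the same references with a bit more explanation.
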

\begin{proof}
Part (b) follows from \cite{Shao13} and \cite[Section~3]{Shao15}.
Part (c) follows exactly as in the proof of Theorem~\ref{S3: Thm SDF well-posedness}(c) above.
\end{proof}

\subsection{Stability of spheres}

In the case $\Sigma$ is a Euclidean sphere in $\R^3,$ we apply the generalized
principle of linearized stability (c.f. \cite[Section~3]{LeCroneSim18}) to prove the 
following result regarding stability of spheres under the Willmore flow, with control on
only first--order derivatives of perturbations.

\begin{theorem}
\label{S5: Thm WF stability}
Fix $\alpha \in (0,1)$, $\mu = 1/4$, and $\bar{\mu} \in (0,1)$, and let $\Sigma$ be a Euclidean sphere
in $\R^3$ with radius $r > 0$. There exists a constant $\delta \in (0,r)$ such that,
given any admissible perturbation $\Gamma_{\rho_0}$ for 
$$
	\rho_0 \in V_{\mu,\delta} := \{ \rho \in bc^{1 + \alpha}(\Sigma): \| \rho \|_{\infty} < r \text{ and }
		\| \rho_0 \|_{1 + \alpha, \infty} < \delta \},
$$
the solution $\rho(\cdot, \rho_0)$  of~\eqref{S5: WF-rho} exists globally in time and converges to some
$\bar{\rho} \in \mathcal{M}_{sph}$ at an exponential rate, in the topology of $E_{\bar\mu}$.
Here, 
$\mathcal{M}_{sph}$ denotes the 
family of functions $\rho \in C^{\infty}(\Sigma,\R)$
for which $\Gamma_\rho$ is a sphere that is close to $\Sigma$ in $\R^3$.
\end{theorem}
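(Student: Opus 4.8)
The plan is to apply the generalized principle of linearized stability (GPLS) in the form given in \cite[Section~3]{LeCroneSim18} to the quasilinear problem \eqref{S5: WF-rho}, using the abstract setup $(E_0,E_1) = (bc^\alpha(\Sigma), bc^{4+\alpha}(\Sigma))$, $E_\mu = bc^{1+\alpha}(\Sigma)$, $E_\gamma = bc^{3+\alpha}(\Sigma)$ with $\mu = 1/4$, $\gamma = 3/4$, already shown in Theorem~\ref{S5: Thm WF well-posedness} to satisfy \textbf{(H1)}--\textbf{(H2)}. The first step is to identify the set of equilibria. The Euclidean spheres near $\Sigma$ (of all radii and centers, subject to being graphs over $\Sigma$ of height less than $r$) are stationary solutions of the Willmore flow, since for a sphere $H^2 = K$ and $H$ is constant, so $\Delta H = 0$ and $2H(H^2-K) = 0$; after pulling back via $\Psi_\rho$ these give a finite-dimensional manifold $\mathcal{M}_{sph} \subset C^\infty(\Sigma)$ of equilibria for \eqref{S5: WF-rho} passing through $\rho = 0$. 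I would next compute its dimension: spheres in $\R^3$ near a fixed sphere form a $4$-parameter family (center $\in \R^3$ and radius), so $\dim \mathcal{M}_{sph} = 4$, and one checks $\mathcal{M}_{sph}$ is a $C^1$ (indeed $C^\omega$) submanifold of $E_\mu$ near $0$ with tangent space $T_0\mathcal{M}_{sph}$.

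The second step is the spectral analysis of the linearization $A_0 := $ the full linearization of $\rho \mapsto A(\rho)\rho - Q(\rho)$ at $\rho = 0$. This is the classical Willmore linearization about a round sphere; it is (up to normalization) the operator $\frac{1}{m}(\Delta_\Sigma^2 + \text{lower order})$, and its spectrum on $L_2(\Sigma)$ — hence, by the standard $bc$-space spectral arguments used in \cite{LeCroneSim18,Shao15}, on $E_0$ — is a discrete set of eigenvalues in terms of the spherical harmonic decomposition. The key facts to extract, which are known from the compact case (e.g.\ \cite{Sim01}), are: (i) $0$ is a semisimple eigenvalue of $-A_0$ whose eigenspace is exactly $T_0\mathcal{M}_{sph}$ (the $\ell = 0$ and $\ell = 1$ spherical harmonics, corresponding to infinitesimal changes of radius and of center); and (ii) the rest of the spectrum lies strictly in the left half plane, $\mathrm{Re}\,\lambda < 0$ bounded away from $0$. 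This normal stability / normal hyperbolicity of $\mathcal{M}_{sph}$ is the hypothesis the GPLS machinery consumes.

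The third step is to invoke the abstract stability theorem. Since $(A,Q)$ satisfies \textbf{(H1)}--\textbf{(H2)}, $\mathcal{M}_{sph}$ is a $C^1$-manifold of equilibria, $\rho = 0 \in \mathcal{M}_{sph}$ is normally stable, and — crucially — the embedding $E_1 \hookrightarrow E_0$ is compact (here one uses that $\Sigma$ is a compact sphere, so Arzel\`a--Ascoli gives compactness of the little H\"older embeddings), the generalized principle of linearized stability yields: there is $\delta > 0$ so that for every $\rho_0 \in V_{\mu,\delta}$ the solution exists globally, the orbit stays in a neighborhood of $\mathcal{M}_{sph}$, and $\rho(t,\rho_0) \to \bar\rho$ for some $\bar\rho \in \mathcal{M}_{sph}$ at an exponential rate in the $E_\mu$-topology; the statement in $E_{\bar\mu}$ for arbitrary $\bar\mu \in (0,1)$ then follows by interpolating the exponential decay in $E_\mu$ against the boundedness of the orbit in $E_\gamma$ (or in $E_1$ for $t \ge \tau > 0$), using instantaneous smoothing from Theorem~\ref{S5: Thm WF well-posedness}(b) and Proposition~\ref{S3: Prop: interpolation of bc}. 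One should also record that the global solution is then smooth for $t > 0$ and $\Gamma(t)$ converges to a round sphere, and note that no convexity of $\Gamma_{\rho_0}$ is assumed — only $\|\rho_0\|_{1+\alpha,\infty} < \delta$.

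I expect the main obstacle to be the spectral step: verifying that the eigenvalue $0$ of $-A_0$ has geometric eigenspace exactly equal to (not merely containing) $T_0\mathcal{M}_{sph}$ and that $0$ is semisimple (algebraic = geometric multiplicity), together with the spectral gap. In the smooth compact setting this is available from the Willmore-about-sphere computations of \cite{Sim01, KS04}, but one has to be careful to transfer the spectral picture from the $L_2$-setting, where self-adjointness makes semisimplicity automatic, to the little-H\"older spaces $E_0 = bc^\alpha(\Sigma)$ in which the abstract theorem is phrased; the standard route is that the spectrum of a uniformly elliptic operator is independent of the particular Banach scale among the usual ones, and that the spectral projections agree, which is by now routine (cf.\ \cite{ShaoSim14, LeCroneSim18}) but must be cited or sketched. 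A secondary point requiring care is checking that $\mathcal{M}_{sph}$ as parameterized by height functions over $\Sigma$ is genuinely a $C^1$-submanifold of $E_\mu$ near $0$ and is contained in $V_\mu \cap E_\gamma$ (indeed in $C^\omega(\Sigma)$), which follows from the explicit formula for the height function of a nearby sphere over the sphere $\Sigma$ and is otherwise elementary.
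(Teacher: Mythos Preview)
Your proposal is correct and follows essentially the same route as the paper: cite \cite{Sim01} for the normal stability of $\rho_*=0$ (the spectral and equilibrium-manifold analysis you outline is precisely what is carried out there), and then invoke the generalized principle of linearized stability from \cite[Theorem~3.2]{LeCroneSim18}. The paper compresses this into two sentences, while you have helpfully unpacked the ingredients (compact embedding $E_1\hookrightarrow E_0$ via compactness of $\Sigma$, transfer of the spectral picture to the $bc$-scale, interpolation to reach $E_{\bar\mu}$), but the strategy is identical.
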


\begin{proof}
It is shown in the proof of \cite[Theorem~1.2]{Sim01} that 
$\rho_* = 0$ is normally stable under \eqref{S5: WF-rho}.
The result then follows from \cite[Theorem~3.2]{LeCroneSim18}.
\end{proof}

\begin{cor}\label{non-convex}
	There exist non--convex hypersurfaces $\Gamma_0$ such that the solution $\rho(\cdot,\rho_0)$ to
	\eqref{S5: WF-rho} with $\Gamma(\rho_0) = \Gamma_0,$ exists globally in time and 
	converges exponentially fast to a sphere.
\end{cor}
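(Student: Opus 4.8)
The plan is to exhibit an explicit family of height functions $\rho_0$ that are small in the $bc^{1+\alpha}(\Sigma)$--topology (so that Theorem~\ref{S5: Thm WF stability} applies and yields global existence plus exponential convergence to a sphere) but for which the associated hypersurface $\Gamma_{\rho_0} = \Psi_{\rho_0}(\Sigma)$ fails to be convex. The point is that $C^{1+\alpha}$--smallness constrains $\rho_0$ and $\nabla_\Sigma \rho_0$ but places no bound whatsoever on $\nabla_\Sigma^2 \rho_0$, and convexity of $\Gamma_{\rho_0}$ is governed by the second fundamental form $l_{ij}(\rho_0)$, which by the formula displayed just before the statement of Theorem~\ref{S5: Thm WF well-posedness} depends on $\partial^2\rho_0$ (through the quasilinear term $\beta(\rho)\,\partial_i\partial_j\rho$). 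Hence one can make $\Gamma_{\rho_0}$ locally non--convex — indeed introduce a small "dimple" — while keeping $\|\rho_0\|_{1+\alpha,\infty}$ below the stability threshold $\delta$.

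First I would fix $\Sigma$ to be the Euclidean sphere of radius $r$ in $\R^3$ and fix the constant $\delta \in (0,r)$ supplied by Theorem~\ref{S5: Thm WF stability} for the given $\alpha$, $\mu = 1/4$, and some choice of $\bar\mu \in (0,1)$. Next I would construct a bump--type perturbation: choose a smooth function $\phi \in C^\infty(\Sigma,\R)$ supported in a small geodesic ball, normalized so that $\phi$ has a strict interior maximum where $\nabla_\Sigma^2\phi$ is negative definite with large norm, and then set $\rho_0 := \varepsilon\,\phi$ for a small parameter $\varepsilon > 0$. For $\varepsilon$ small enough we have $\|\rho_0\|_\infty < r$ and $\|\rho_0\|_{1+\alpha,\infty} = \varepsilon\,\|\phi\|_{1+\alpha,\infty} < \delta$, so $\rho_0 \in V_{\mu,\delta}$ and $\Gamma_{\rho_0}$ is an admissible perturbation in the sense required by Theorem~\ref{S5: Thm WF stability}; consequently the solution $\rho(\cdot,\rho_0)$ of \eqref{S5: WF-rho} exists globally and converges exponentially to some $\bar\rho \in \mathcal{M}_{sph}$, i.e.\ $\Gamma(t)$ converges to a round sphere. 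It only remains to check that $\Gamma_{\rho_0}$ is not convex.

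For the non--convexity I would evaluate the second fundamental form of $\Gamma_{\rho_0}$ at a point where it must be indefinite. At the north pole of the unperturbed sphere (say the center of the support of $\phi$, placed so that $\phi$ attains its maximum there), the formula for $l_{ij}(\rho)$ before Theorem~\ref{S5: Thm WF well-posedness} reduces, to leading order in $\varepsilon$, to $l_{ij}(\rho_0) = l_{ij} + \varepsilon\,\partial_i\partial_j\phi + O(\varepsilon\,|\nabla\phi|)$ in normal coordinates, where $l_{ij}$ is the (definite) second fundamental form of $\Sigma$. By scaling $\phi$ within its fixed $C^{1+\alpha}$--size — that is, by taking $\phi$ supported in an even smaller ball so that $\|\nabla_\Sigma^2\phi\|_\infty$ is as large as we like while $\|\phi\|_{1+\alpha,\infty}$ stays bounded — we can arrange that $\varepsilon\,\partial_i\partial_j\phi$ dominates $l_{ij}$ at that point and is sign--indefinite (negative in some direction), so that $\Gamma_{\rho_0}$ has a point of negative Gauss curvature and is therefore non--convex. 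Taking $\Gamma_0 := \Gamma_{\rho_0}$ gives the claimed example.

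The main obstacle is the bookkeeping in the last step: one must simultaneously keep $\|\rho_0\|_{1+\alpha,\infty} < \delta$ and make $\Gamma_{\rho_0}$ genuinely non--convex, and since shrinking the support of a bump of fixed height increases $\|\nabla_\Sigma^2\phi\|_\infty$ but also the $C^1$--seminorm grows, one has to be slightly careful about how the scaling is distributed. The clean way around this is to decouple amplitude from concentration: write $\rho_0 = \varepsilon\,\phi_\lambda$ where $\phi_\lambda(\p) = \psi(\mathrm{dist}_\Sigma(\p,\p_0)/\lambda)$ for a fixed profile $\psi$, note that $\|\phi_\lambda\|_\infty$ is $\lambda$--independent while $\|\nabla_\Sigma\phi_\lambda\|_\infty \sim \lambda^{-1}$, $[\nabla_\Sigma\phi_\lambda]_{\alpha} \sim \lambda^{-1-\alpha}$, and $\|\nabla_\Sigma^2\phi_\lambda\|_\infty \sim \lambda^{-2}$; then first fix $\lambda$ small, and afterwards choose $\varepsilon$ so small that $\varepsilon\lambda^{-1-\alpha}\|\psi\|_{C^{1+\alpha}} < \delta$ — this is possible for any fixed $\lambda$ — while $\varepsilon\lambda^{-2}$ remains large compared to the curvature $1/r$ of $\Sigma$, which holds precisely because $\varepsilon\lambda^{-2} = (\varepsilon\lambda^{-1-\alpha})\cdot\lambda^{\alpha-1}$ and $\lambda^{\alpha-1}\to\infty$ as $\lambda\to 0$. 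With this two--parameter choice the verification is routine.
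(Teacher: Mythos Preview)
The paper gives no proof of this corollary at all; it is stated immediately after Theorem~\ref{S5: Thm WF stability} and treated as self-evident, the point having already been made in the Introduction that smallness in the $C^{1+\alpha}$--topology does not constrain second derivatives and hence does not force convexity. Your proposal supplies exactly the explicit construction the paper omits, and the scaling argument in your final paragraph --- fixing $\varepsilon\lambda^{-1-\alpha}$ of order $\delta$ while $\varepsilon\lambda^{-2}=(\varepsilon\lambda^{-1-\alpha})\lambda^{\alpha-1}\to\infty$ --- is correct and is precisely what makes the corollary work.

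There is one slip to repair. At the maximum of $\phi$ the Hessian $\partial_i\partial_j\phi$ is negative semi-definite, not sign-indefinite; since in the paper's convention the second fundamental form $l_{ij}$ of the round sphere is already negative definite, the sum $l_{ij}+\varepsilon\,\partial_i\partial_j\phi$ remains definite at that point and no non-convexity is detected there. The point of negative Gauss curvature sits on the \emph{shoulder} of the bump: for $\phi_\lambda(\p)=\psi(\mathrm{dist}_\Sigma(\p,\p_0)/\lambda)$ with a fixed radial profile $\psi$ satisfying $\psi(0)=1$, $\psi'(0)=\psi'(1)=0$, the radial second derivative $\lambda^{-2}\psi''(s/\lambda)$ is strictly positive at some $s_*\in(0,\lambda)$ (since $\psi'$ vanishes at both endpoints and is negative in between), while the tangential component $s^{-1}\partial_s\phi_\lambda=(s\lambda)^{-1}\psi'(s/\lambda)$ is strictly negative there. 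Both are of order $\lambda^{-2}$, so once $\varepsilon\lambda^{-2}\gg 1/r$ the second fundamental form $l_{ij}(\rho_0)$ has eigenvalues of opposite sign at $s_*$, giving $K_{\rho_0}<0$. With this relocation of the test point your two-parameter argument goes through verbatim.
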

We note here that Theorem~\ref{S5: Thm WF stability} also holds true for the surface diffusion flow, 
as was shown in~\cite[Section~4.5]{LeCroneSim18}.

\appendix
\section{}
Suppose $\Sigma$ is a (URT)-hypersurface with tubular neighborhood of radius $\a$.
Given $\rho \in C(\Sigma)$ with $\| \rho \|_{\infty} < \a$, let
$\Gamma_\rho:=\Psi_\rho(\Sigma),$ where $ \Psi_\rho( \p)=\p + \rho( \p) \nu_{\Sigma}(\p)$ for $\p\in\Sigma$.

Then $\Gamma_\rho$ enjoys the following properties.
\begin{prop}\label{A-Gamma_rho URT}
 Let $k\in\N\cup\{\infty\}$.
\begin{itemize}
\item[(a)]  Suppose $\rho\in BC^{k+1}(\Sigma)$  and $\|\rho\|_{\infty}<\a$. 
Then $\Gamma_\rho$ is $C^k$--uniformly regular.
\vspace{1mm}
\item[(b)]
There exists   $\varepsilon_1>0$ such that, for any $\rho\in BC^{2}(\Sigma)$ with $\|\rho\|_\infty\le \varepsilon_1$, the hypersurface $\Gamma_\rho$ has a tubular neighborhood of radius $\a_1$ for some positive number $\a_1=\a_1(\varepsilon_1,\rho)$.
\end{itemize}
\end{prop}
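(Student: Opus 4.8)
For part~(a) the plan is to transport the atlas of $\Sigma$ to $\Gamma_\rho$ along the diffeomorphism $\Psi_\rho$. Since $\rho\in BC^{k+1}(\Sigma)$ and $\|\rho\|_\infty<\a$, the map $\Psi_\rho$ is a $C^{k+1}$--embedding of $\Sigma$ onto $\Gamma_\rho$ (its differential is injective for $\|\rho\|_\infty<\a$ by Remark~\ref{S3: Rmk: tubular-nbgh}(d)), so $\tilde{\A}:=\{(\Psi_\rho(\Uk),\varphi_\kappa\circ\Psi_\rho^{-1}):\kappa\in\K\}$ is an atlas, with local parametrizations $\tilde\psi_\kappa:=\Psi_\rho\circ\psi_\kappa$. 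The key observation is that the transition maps $\tilde\varphi_\eta\circ\tilde\psi_\kappa=\varphi_\eta\circ\psi_\kappa$, the partition of unity $\{(\pi_\kappa\circ\Psi_\rho^{-1})^2\}_\kappa$, and the associated cut--off data $\psi_\kappa^*\pi_\kappa$ and $\zeta$ are literally those of $\Sigma$; hence (R1), (R2), the $C^k$--version of (R3), and the $C^k$--versions of (L1)--(L3) hold automatically. It remains to verify (R4) and (R5) for $g_{m+1}|_{\Gamma_\rho}$, and here I would use $\tilde\psi_\kappa^*(g_{m+1}|_{\Gamma_\rho})=\psi_\kappa^*g(\rho)$ together with formula~\eqref{S3: gamma_ij}: $g_{ij}(\rho)$ is a fixed polynomial in $g_{ij}$, the components of $L_\Sigma$, $\rho$, and $\partial\rho$. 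Since $\Sigma$ is a (URT)--hypersurface, $\|\psi_\kappa^*g\|_{k,\infty}\le c(k)$ by (R4) and $\|\psi_\kappa^*L_\Sigma\|_{k,\infty}\le c(k)$ uniformly in $\kappa$ (the latter regularity of the Weingarten tensor being already implicit in Section~\ref{Section 4}), while $\|\psi_\kappa^*\rho\|_{k+1,\infty}\le c(k)$ uniformly in $\kappa$ by the retraction property of Proposition~\ref{S3: Prop: retraction of bc} (its $BC$--analogue); as only first--order derivatives of $\rho$ occur, $\psi_\kappa^*g(\rho)$ is bounded in $BC^k$ uniformly in $\kappa$, which is (R4). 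For (R5) one combines (R5) for $\Sigma$ with the comparability $g(\rho)\sim g$, which follows from~\eqref{equivalence of metrics} and $\|\rho\|_\infty<\a$ with constants depending only on $\a$, $\|\rho\|_\infty$, and $\Sigma$. Thus $(\Gamma_\rho,g_{m+1}|_{\Gamma_\rho})$ is $C^k$--uniformly regular.

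For part~(b) I would first fix $\varepsilon_1=\varepsilon_1(\a)>0$ small enough that $\Psi_\rho$ is an embedding and the estimates below hold, take any $\rho\in BC^2(\Sigma)$ with $\|\rho\|_\infty\le\varepsilon_1$, and write $\Lambda:=\|\rho\|_{2,\infty}$. By Remark~\ref{S3: Rmk: tubular-nbgh}(c) (whose proof applies verbatim to $C^2$ hypersurfaces), it suffices to produce a radius $\a_1=\a_1(\a,\Lambda)>0$ for which $\Gamma_\rho$ satisfies the uniform ball condition of radius $\a_1$. The heart of the matter is the following claim: \emph{there are $r=r(\a,\Lambda)>0$ and $c=c(\a,\Lambda)>0$ such that for every $q_0=\Psi_\rho(\p_0)\in\Gamma_\rho$ the set $\Gamma_\rho\cap\B^{m+1}(q_0,r)$ is the graph over $T_{q_0}\Gamma_\rho$ of a function $h_{q_0}$ with $h_{q_0}(0)=0$, $\nabla h_{q_0}(0)=0$, and $\|h_{q_0}\|_{2,\infty}\le c$.} This is the analogue, over a curved base and with a nontrivial normal translation, of the construction in the proof of Example~\ref{ex:URT}(c).

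To prove the claim, I would work in a chart $(\Uki,\varphi_\iota)$ with $\p_0\in\psi_\iota(r'\Bm)$ ($r'<1$ uniform, by (R2)) and consider the $C^2$--parametrization $F:=\Psi_\rho\circ\psi_\iota:\Bm\to\Gamma_\rho$. Then $F$ is injective, $\|F\|_{2,\infty}\le c(\a,\Lambda)$ (its second derivatives being controlled by $\Lambda$, by $\|\psi_\iota^*L_\Sigma\|_{1,\infty}$, and by the chart bounds), and $|DF(x)v|\ge c(\a)^{-1}|v|$ uniformly, using $|D\Psi_\rho w|^2\ge|(I-\rho L_\Sigma)w|^2\ge(1-\|\rho\|_\infty/\a)^2|w|^2$ together with (R5). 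A quantitative inverse function theorem applied to $x\mapsto\pi_{T_{q_0}\Gamma_\rho}(F(x)-q_0)$ then yields the graph representation over a ball whose radius and whose $C^2$--bound depend only on $\a$ and $\Lambda$. The remaining, and most delicate, point is that no distant part of $\Gamma_\rho$ enters $\B^{m+1}(q_0,r)$: if $q'=\Psi_\rho(\p')\in\Gamma_\rho$ with $|q'-q_0|$ small, then projecting the identity $q'-q_0=(\p'-\p_0)+(\rho(\p')\nu_\Sigma(\p')-\rho(\p_0)\nu_\Sigma(\p_0))$ onto $T_{\p_0}\Sigma$ and using that $\p'$ lies on the connected local graph of $\Sigma$ over $T_{\p_0}\Sigma$ (a standard consequence of the uniform ball condition for $\Sigma$), together with the fact that the $\rho\,\nu_\Sigma$--term has tangential part of size $\lesssim(\varepsilon_1/\a)\,d_\Sigma(\p',\p_0)$, one obtains $d_\Sigma(\p',\p_0)\le C(\a)|q'-q_0|$ once $\varepsilon_1$ is small; hence $\p'\in\psi_\iota(\B^m(x_0,r))$ once $|q'-q_0|$ is below a threshold depending on $\a$ and $\Lambda$, which forces $q'$ onto the graph of $h_{q_0}$. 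Granting the claim, set $\a_1:=\min\{1/c,\,r/4\}$. For any $q_0\in\Gamma_\rho$ and any $q'\in\B^{m+1}(q_0\pm\a_1\nu_{\Gamma_\rho}(q_0),\a_1)\cap\Gamma_\rho$ one has $|q'-q_0|\le2\a_1\le r/2$, so $q'=q_0+(p,h_{q_0}(p))$ in the frame $\{T_{q_0}\Gamma_\rho,\nu_{\Gamma_\rho}(q_0)\}$, and the inclusion reads $|p|^2+h_{q_0}(p)^2<\pm 2\a_1 h_{q_0}(p)$, which is impossible because $|h_{q_0}(p)|\le\tfrac{c}{2}|p|^2$ and $\a_1 c\le1$ --- exactly the Taylor argument of Example~\ref{ex:URT}(c). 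Hence $\Gamma_\rho$ satisfies the uniform ball condition of radius $\a_1$, and (b) follows.

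The main obstacle I expect is this claim, and within it the localization step ruling out distant sheets: both the uniform (over $q_0$) $C^2$--graph representation and the estimate $d_\Sigma(\p',\p_0)\lesssim|q'-q_0|$ hinge on using \emph{both} defining features of a (URT)--hypersurface at once --- the uniform regularity of $\Sigma$ (uniform chart bounds, bounded Weingarten tensor) to control the local geometry, and the uniform tubular neighborhood of $\Sigma$ (equivalently, its uniform ball condition) to prevent $\Sigma$, and therefore $\Gamma_\rho$, from folding back on itself within a controlled distance. The other ingredients --- the polynomial structure of $g(\rho)$ and the metric comparability in~(a), the quantitative inverse function theorem, and the final Taylor contradiction in~(b) --- are routine.
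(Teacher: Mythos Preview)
Your argument for part~(a) coincides with the paper's: both transport $\A$ to $\Gamma_\rho$ along $\Psi_\rho$, observe that (R1)--(R3) are inherited because the transition maps are unchanged, and verify (R4)--(R5) via $\tilde\psi_\kappa^*(g_{m+1}|_{\Gamma_\rho})=\psi_\kappa^*g(\rho)$, the polynomial formula~\eqref{S3: gamma_ij}, and the two-sided comparability of $g(\rho)$ with $g$.

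For part~(b) the two routes genuinely differ. The paper does not pass through the uniform ball condition; it proves injectivity of $X_\rho:\Sigma\times(-\a_1,\a_1)\to\R^{m+1}$ directly via a ``tubular slab'' argument. After refining $\A$ so that every patch is a uniform $C^2$ graph over its own tangent planes (Claim~1), it shows (Claim~2) that each $\p\in\psi_\kappa(\tfrac12\B^m)$ has uniform positive Euclidean distance $c_1$ to $\partial X(\Uk\times[-\a/2,\a/2])$, and then (Claim~4) that for $\varepsilon_1<\min\{\a/8,c_1/2\}$ and small $\a_1$ the ball $\B(\Psi_\rho(\p),2\a_1)$ sits inside that slab. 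Hence if $X_\rho(\p,s)=X_\rho(\q,t)$, the point $\Psi_\rho(\q)=X(\q,\rho(\q))$ lies in the slab, and global injectivity of $X$ on the $\a$-tube forces $\q\in\Uk$; local injectivity (Claim~3, the $\Gamma_\rho$-analogue of Claim~1) finishes. Your approach---show $\Gamma_\rho\cap\B(q_0,r)$ is a single uniform $C^2$ graph over $T_{q_0}\Gamma_\rho$ and then run the Taylor contradiction of Example~\ref{ex:URT}(c)---is a legitimate alternative, and your main Claim is essentially the paper's Claims~1 and~3 combined with the localization. The one place I would ask you to tighten is precisely that localization: as written you invoke ``$\p'$ lies on the connected local graph of $\Sigma$ over $T_{\p_0}\Sigma$'' on the way to $d_\Sigma(\p',\p_0)\le C|q'-q_0|$, but the former is exactly what must be established, and calling it ``a standard consequence of the uniform ball condition'' hides the nontrivial step. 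A clean fix is to first bound $|\p'-\p_0|\le|q'-q_0|+2\varepsilon_1$ by the triangle inequality and then observe that the nearest-point retraction $\Pi:=\mathrm{pr}_1\circ X^{-1}$ is $2$-Lipschitz from $(U_{\a/2},g_{m+1})$ to $(\Sigma,g)$ (since $|(I-sL_\Sigma)^{-1}|\le 2$ for $|s|\le\a/2$, and the straight segment from $\p_0$ to $\p'$ stays in $U_{\a/2}$), giving $d_\Sigma(\p',\p_0)\le 2(|q'-q_0|+2\varepsilon_1)$; for $\varepsilon_1$ and $r$ small this places $\p'$ in the chart. The paper's slab bound (Claim~2) is a different packaging of the same control---it replaces the Lipschitz estimate for $\Pi$ by an explicit lower bound on ${\rm dist}(\p,\partial D_\kappa)$---and has the practical advantage of never comparing intrinsic and extrinsic distances on $\Sigma$.
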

\begin{proof} 
(a) 
We can construct an atlas $\A_\rho=\{{\sf O}_{\kappa,\rho}, \varphi_{\kappa,\rho}): \kappa\in \K\}$ 
for $\Gamma_\rho$ as follows. 
Define
$$
{\sf O}_{\kappa,\rho}:=\Psi_\rho \circ \psi_\kappa (\Bm),\quad  \varphi_{\kappa,\rho}:= \varphi_\kappa \circ \Psi_\rho^{-1},\quad \psi_{\kappa,\rho}= \varphi_{\kappa,\rho}^{-1}.
$$ 
Then $\A_\rho$ inherits properties (R1)--(R3) from $\A$.
Next we note that 
$$\psi_{\kappa,\rho}^* (g_{m+1}|_{\Gamma_\rho})= \psi_\kappa^*g(\rho),$$
and  that by  \eqref{g(rho)-ij}, $g(\rho)$ involves first order derivatives of $\rho$.
Hence,  $\psi_\kappa^*g(\rho)$ is $C^k$ for $\rho\in C^{k+1}$.
It follows readily from \eqref{g(rho)-ij} that 
\begin{equation*}
\begin{aligned}
|(I - \rho L_\Sigma)\xi |^2 (\p)\le g_{ij}(\rho)(\p)(\xi,\xi)\le (1+|a(\rho)|^2) | (I-\rho L_\Sigma)\xi|^2(\p)
\end{aligned}
\end{equation*}
for $\p\in\Sigma$ and $\xi=\xi^i\tau_i(\p)\in T_\p \Sigma$.
Properties (R4)-(R5) now follow from
$$\min\{(1-(\rho\kappa_r)(\p))^2\}|\xi|^2\le |(I-\rho L_\Sigma)\xi|^2(\p)\le \max\{(1-(\rho\kappa_r)(\p))^2\}|\xi|^2$$
and Remark~\ref{S3: Rmk: tubular-nbgh}(d).

\medskip\noindent
(b) 
Let $r_0\in (0,1)$ be the constant related to the uniformly shrinkable property.

\medskip\noindent
{\bf Claim 1:} 
Let $\tilde{r}_0:=\frac{1+r_0}{2}$.
There exists a uniform constant $r_1$ such that for any $\kappa\in \K$ and $\p \in \psi_\kappa(\tilde{r}_0\Bm)$, 
$\psi_\kappa(\B^m( x_\p, r_1))$ is a graph $f_{\kappa,\p}$ over $T_\p \Sigma$ with $x_\p := \varphi_\kappa(\p)$ satisfying 
\begin{equation}
\label{f-bound}
\|f_{\kappa,\p}\|_{2,\infty}\leq c_0
\end{equation}
for some $c_0>0$ independent of $\kappa$ and $\p$.
\begin{subproof}[Proof of Claim 1] 
Let $\kappa\in\K$ and $\p\in \psi_\kappa(\tilde r_0\B^m)$ be given.
 Then there exists $x_{\p}$ in $\tilde{r}_0\Bm$ such that $\p=\psi_k(x_{\p})$.
Let 
$$
{\mathcal P}_\p:=I -\nu_{\Sigma}(\p)\otimes\nu_{\Sigma}(\p) 
$$ 
be the   orthogonal projection of $\R^{m+1}$ onto $T_\p\Sigma$. 
In the following we will identify $T_\p\Sigma$ with $\R^m$.
It follows from the boundedness of $\|L_\Sigma\|_\infty$ that
there is a universal constant $b_0$ such that
${\mathcal P}_\p: \psi_\kappa(\B(x_\p,b_0)) \to T_\p(\Sigma) \doteq \R^m$ is injective.
Let
$$
F_{\kappa,\p}(x):=({\mathcal P}_\p \circ\psi_\kappa) (x),\quad x\in \B(x_\p,b_0).
$$
Then we obtain  for the Fr\'echet derivative of $F_{\kappa,\p}$
\begin{equation}
\label{DF}
DF_{\kappa,\p} (x_\p)={\mathcal P}_\p D\psi_\kappa(x_\p)= D\psi_\kappa(x_\p),
\end{equation}
as $D\psi_\kappa(x_\p)\xi\in T_\p \Sigma $ for all $\xi\in \R^m$.
We infer from (R5) that
\begin{equation}
\label{D psi}
 (1/\gamma_1)^2 |\xi|^2\leq |D\psi_\kappa (x) \xi|^2=   (\psi_\kappa^* g)(x)(\xi,\xi)  \leq  \gamma^2_1 |\xi|^2, \
\quad  x\in \B^m, \;\; \xi\in\R^m,
\end{equation}
for some uniform constant $\gamma_1\geq 1$.
It follows from  \eqref{DF} and \eqref{D psi} that
the spectrum of $DF_{\kappa,\p}(x)$ lies outside the ball $\B_{\mathbb C}(0,1/\gamma_1)$ for any $x\in\B^m$.
Indeed, suppose
$\mu v = DF_{\kappa,p}(x) v$
for some $\mu\in \C$ and $v=\xi +i\eta \in \C^m$ with $|v|=1$.
Then 
$$
|\mu|^2 
=|DF_{\kappa,p}(x) v|^2
=|D\psi_\kappa(x)\xi|^2 + |D\psi_\kappa(x)\eta|^2\ge (1/\gamma_1)^2.
$$
Lemma 4.1 in \cite{AHS94} 
implies that $DF_{\kappa,\p}(x)$ is invertible with 
\begin{equation}
\label{DF-inverse}
|[DF_{\kappa,\p}(x)]^{-1}|\le \gamma_2,
\end{equation}
where the constant $\gamma_2$ is independent of $x\in\B^m$ and $\kappa,\p$.
By the inverse function theorem, there exists a uniform constant $r_1$
which is independent of $\kappa$ and $\p\in\psi_\kappa(\tilde{r}_0\Bm)$
such that
$$
F_{\kappa,\p}: \B^m(x_\p, r_1)\to {\mathcal P}_\p\psi_\kappa(\B^m(x_\p, r_1))
$$
is a diffeomorphism.
Next we note that 
\begin{equation*}
\begin{aligned}
\partial_j F^{-1}_{\kappa,\p}(y) &= [DF_{\kappa,\p}(F^{-1}_{\kappa,\p}(y))]^{-1}e_j \\
\partial_i\partial_j F^{-1}_{\kappa,\p}(y)
&=-[DF_{\kappa,\p}(F^{-1}_{\kappa,\p}(y))]^{-1}\;\partial_i  [DF_{\kappa,\p}(F^{-1}_{\kappa,\p}(y))]\;
 [DF_{\kappa,\p}(F^{-1}_{\kappa,\p}(y))]^{-1}e_j
\end{aligned}
\end{equation*}
Recall that
$
\partial_i\partial_j \psi_\kappa =\Gamma_{ij}^k \partial_k \psi_\kappa + l_{ij} \nu_\Sigma.
$
In view of \cite[Formula~(3.19)]{Ama13}, \eqref{D psi} and the boundedness of $\|L_\Sigma\|_\infty$, we conclude that
\begin{equation}
\label{D2 psi}
\| \psi_\kappa  \|_{2,\infty} \leq \gamma_3, \qquad \text{for all $\kappa \in \K$}.
\end{equation}

\medskip
It follows from   \eqref{DF-inverse} and \eqref{D2 psi} that 
$\|F_{\kappa,\p}^{-1}\|_{2,\infty}\leq c$ for some $c$ independent of $\kappa,\p$.
Define  $\Phi_{\kappa,\p}:   {\mathcal P}_\p \psi_\kappa(\B^m(x_\p, r_1))
 \to \Sigma$ 
by $\Phi_{\kappa,\p}:=\psi_\kappa\circ F^{-1}_{\kappa,\p}.$
Note that
\begin{align*}
\Phi_{\kappa,\p}(y)
\notag&={\mathcal P}_\p \circ \Phi_{\kappa,\p}(y) + (I-{\mathcal P}_\p) \circ \Phi_{\kappa,\p}(y) \\
\label{graph function}&= y + (\nu_\Sigma(\p)|\Phi_{\kappa,\p}(y))\nu_\Sigma(\p))
=: y +f_{\kappa,\p}(y)\nu_\Sigma(p).
\end{align*}
We can now conclude that \eqref{f-bound} holds.
\end{subproof}

 In the following, we assume that 
\begin{equation}
\label{r1-small}
r_1<\min\Big\{\frac{1-r_0}{2}, \frac{1}{2\gamma_1 \gamma_3}\Big\}.
\end{equation}
By Claim 1, we can find $L\in \N$ such that, in every $\Uk$, there exist $x_{\kappa,i} \in r_0\bar{\B}^m$ with $i=1,\cdots,L$ such that 
\begin{center}
$\bigcup_{i=1}^L \psi_\kappa(\Bm(x_{\kappa,i}, r_1/4))$ covers $\psi_\kappa(r_0\bar\B^m)$.
\end{center}
Taking new local patches $\psi_\kappa(\Bm(x_{\kappa,i}, r_1/2))$,
after relabelling, translation and scaling, we obtain a new atlas satisfying (R1)--(R5), still denoted 
by $\A=\{(\Uk,\varphi_\kappa): \kappa\in \K\}$. 
Note that for this new atlas, $\Uk$ is the graph of a function $f_{\kappa,\p}$ over $T_\p \Sigma$
for any $\kappa\in\K$ and $\p\in \Uk$. Moreover, \eqref{f-bound} still holds  true.
In addition, we can take uniformly shrinkable constant $r_0=1/2$.
Note also that, by (R5), we can assume that  $r_1$ is chosen so small that
\begin{equation}
\label{small patch}
|\p - \q|<\a/8,\quad \p,\q \in \Uk.
\end{equation}

Let ${\rm dist}(\cdot,\cdot)$ denote the Euclidean distance between two compact subsets in $\R^{m+1}$.

\medskip\noindent
{\bf Claim 2:} There exists $c_1>0$ such that ${\rm dist}(\p,   \partial  X(\Uk, [-\a/2,\a/2])) 
> c_1$ for all $\kappa\in \K$ and $\p\in \psi_\kappa(\frac{1}{2} \Bm)$.

\begin{subproof}[Proof of Claim 2] 
We set
$$
D_\kappa := X (\Uk\times [-\a/2, \a/2 ]),
$$
and
$$
S_{1,\kappa}:= X (\Uk\times \{-\a/2\}) \cup X (\Uk\times \{\a/2\}), \quad S_{2,\kappa} := \partial D_\kappa \setminus S_{1,\kappa}.
$$
We now  show that ${\rm dist}(\p, \partial D_\kappa)$ is uniformly positive.

\medskip
\noindent
Case 1: $|\p-q|= {\rm dist}(\p, \partial D_\kappa)$ for some $q\in S_{1,\kappa}$

\smallskip
\noindent
Since $q\in S_{1,\kappa}$, we can find some $\q\in \Uk$ such that,  without 
loss of generality, we have $X( \q, \a/2)=q$.
This implies
$$
|\p - q| \geq \a/2 - | \p - \q | \geq \a/4.
$$

\smallskip
\noindent
Case 2: $|\p-q|= {\rm dist}(\p, \partial D_\kappa)$ for some $q\in S_{2,\kappa}.$

\smallskip
\noindent
We first observe that, for any $\p \in \psi_\kappa(\frac{1}{2}(\B^m))$, 
we have by  \eqref{D psi}, \eqref{D2 psi} and Taylor expansion that
\begin{equation*}
|\p -\q|=|\psi_\kappa(x_\p) -\psi_\kappa(x_\q)| 
\ge \frac{r_1}{2}\left[\frac{1}{2\gamma_1} - \frac{r_1}{2} \gamma_3\right],
\end{equation*}
where $\p=\psi_\kappa(x_\p)$ with $x_\p$ in $\frac{1}{2}\B^m$ and $\q=\psi_\kappa(x_\q)$ with $x_\q\in \partial\B^m$.
The assumption \eqref{r1-small} now implies
\begin{equation}
\label{distance}
{\rm dist}(\p, \partial \Uk)\geq \eta_0=r_1/(8\gamma_1).
\end{equation}
\medskip
Moreover, $S_{2,\kappa}= X(\partial \Uk \times (-\a/2, \a/2))$ implies that there exist $\q\in \partial \Uk $ and $s\in (-\a/2, \a/2)$ such that $q=X(\q,s)$.

Because of \eqref{small patch}, we can  realize $\p$ as a point on the graph 
of $f_{\kappa,\q} $, cf. the following figure.

\begin{center}
\includegraphics[scale=.8]{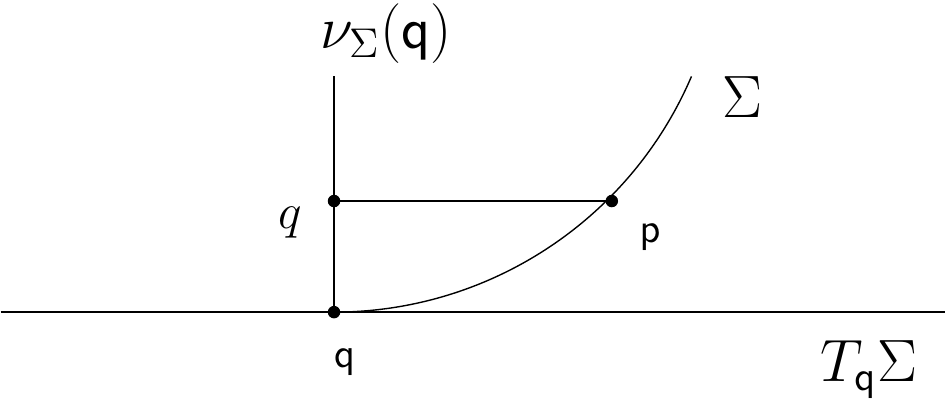}
\end{center}

By \eqref{distance}, we observe that  $|\p-\q|\geq \eta_0$. 
Let  $d=|\p-q|$.  Using \eqref{f-bound}, we have
$$
d^2 + d^2 c_0^2 \geq \eta_0^2,
$$
which implies
$$
|\p-q|\geq \eta_1
$$
for some uniform constant $\eta_1>0$.
Thus we can take $c_1=\min\{\a/4,\eta_1\}$ independent of  $\p$ and $\kappa$.
\end{subproof}

\medskip\noindent
{\bf Claim 3:} 
Let $\delta\in (0,\a)$ and $M>0$ be fixed.
Then there exists a constant $r_2>0$ such that for any  $\rho\in BC^2(\Sigma)$ with $\|\rho\|_\infty\le \delta$, $\|\rho\|_{2,\infty}\le M,$
 and any $\p\in \psi_\kappa(\frac{3}{4} \Bm)$,
 $\Psi_\rho(\psi_\kappa(x_\p, r_2))$ is the graph of a $C^2$-function $h_{\kappa,\p}$ over $T_{\Psi_\rho(\p)}\Gamma_\rho$ satisfying
\begin{equation}
\label{h-bound}
\| h_{\kappa,\p}\|_{2,\infty} \leq c_2,
\end{equation}
for some $c_2>0$ independent of $\kappa,\p$ and $\rho$.
\begin{subproof}[Proof of Claim 3] 
The proof is basically the same as that of Claim 1, as $\Gamma_\rho$ is a $C^2$-hypersurface and $C^1$-{uniformly regular} by
part (a) of the proposition; and all we need for the proof of Claim 1  is this property.
\end{subproof}
We  assume   
\begin{equation}\label{ASP}
\varepsilon_1<\min\{\a/8,c_1/2\}, 
\end{equation}
where $c_1$ is the constant in Claim 2.
By our choice of $\varepsilon_1$, following the construction below Claim 1, we can further modify the atlas $\A$, still with 
uniform shrinkable parameter $r_0=1/2$, such that for every 
$\rho\in BC^2(\Sigma)$ with $\|\rho\|_\infty\le \varepsilon_1$,
$\Psi_\rho(\Uk)$ is a graph over $T_{\Psi_\rho(\p)}\Gamma_\rho$,
for any $\kappa\in\K$ and $\p\in\Uk$.

Claim 3 and Example 3.2(c) imply that $\Psi_\rho(\Uk)$ has a tubular neighborhood of radius $\a_1$, 
where $\a_1$ is independent of $\kappa,\p$.
In order to prove that $\Gamma_\rho$ has a tubular neighborhood of radius $\a_1$, 
it suffices to show that 
$$
X_\rho: \Sigma \times (-\a_1,\a_1): (\p,s)\mapsto \Psi_\rho(\p)+ s \nu_{\Gamma_\rho}(\Psi_\rho(\p))
$$
is injective.

\medskip\noindent
{\bf Claim 4:} For sufficiently small $\a_1>0$ and any $\rho\in  BC^2(\Sigma)$ with 
$\|\rho\|_{\infty} \leq \varepsilon_1$ satisfying \eqref{ASP} and 
$\p\in \psi_\kappa(\frac{1}{2}\Bm)$, it holds that $\B_{\R^{m+1}}(\Psi_\rho(\p),2\a_1)$ 
is contained in $X(\Uk,[-\a/2,\a/2])$.
\begin{subproof}[Proof of Claim 4] 
Define $D_\kappa$, $S_{1,\kappa}$ and $S_{2,\kappa}$ as in Claim 2. 
Given any $\p \in \psi_\kappa(\frac{1}{2}\Bm)$, there exists some $q\in \partial D_\kappa$ such that
$$
| \Psi_\rho (\p) - q |= {\rm dist}(\Psi_\rho (\p), \partial D_\kappa).
$$
If $q\in S_{1,\kappa}$, then there exists $\q \in \Uk$ so that,
without loss of generality, $X(\q,\a/2)=q$. 
By \eqref{small patch} and \eqref{ASP}, we infer that
$$
| \Psi_\rho (\p) - q | \geq \a/2 - |\Psi_\rho(\p)-\p| - | \p -\q| \geq  \a/4.
$$
If $q\in S_{2,\kappa}$, by Claim 2 and \eqref{ASP}
$$
| \Psi_\rho (\p) - q | \ge |\p-q| - |\rho(\p)| \geq c_1/2.
$$
Therefore, it suffices to take $\a_1\leq \min\{\a/8, c_1/4\}=c_1/4$.
\end{subproof}

If $X_\rho(\p,s)= X_\rho(\q,t)$ for some $\p, \q\in \Sigma$ and $s,t \in  (-\a_1, \a_1)$,
we may assume that  $\p\in \psi_\kappa(\frac{1}{2}\Bm)$ for some $\kappa\in\K$.
It follows that 
$$
|\Psi_\rho(\p)-\Psi_\rho(\q)| = |s\nu_\Sigma(\p)-t\nu_\Sigma(\q)|<2 \a_1.
$$ 
We conclude from Claim 4 that 
$\Psi_\rho(\q) \in X(\Uk,[-\a/2,\a/2])$ and thus $\q \in \Uk$ as well.
However, in this case, we already know that $\Psi_\rho(\Uk)$ has an 
$\a_1$--tubular neighborhood, which implies that $\p=\q$ and $s=t$.
\end{proof}

\begin{remark}
In Proposition~\ref{A-Gamma_rho URT}(b) it would be desirable to be able to replace the smallness condition 
$\|\rho\|_\infty \le \varepsilon_1$ by the more natural condition $\|\rho\|_\infty <\a$.

In the special case that $\Sigma$ is compact, this property  holds by Remark~\ref{S3: Rmk: tubular-nbgh}(b), 
as $\Gamma_\rho$ is a compact (closed) $C^2$-hypersurface.
\end{remark}




\end{document}